\theoremstyle{definition}
\newtheorem{theorem}{Theorem}
\newtheorem{corollary}[theorem]{Corollary}
\newtheorem{lemma}[theorem]{Lemma}
\newtheorem{proposition}[theorem]{Proposition}
\theoremstyle{definition}
\newtheorem{definition}[theorem]{Definition}
\newtheorem{example}[theorem]{Example}
\newtheorem{remark}[theorem]{Remark}
\DeclareMathOperator{\supp}{supp}
\newcommand{\cat}[1]{\ensuremath{\mathbf{#1}}}
\newcommand{\id}[1][]{\ensuremath{\mathrm{id}_{#1}}}
\newcommand{\chronprec}{\ll}
\newcommand{\causprec}{\prec}
\newcommand{\closure}[1]{\overline{{#1}}} 
\newcommand{\restrict}[1]{\ensuremath{|_{#1}}}
\newcommand{\inprod}[2]{\langle #1 \mid #2 \rangle}
\newenvironment{pic}[1][]
{\begin{aligned}\begin{tikzpicture}[#1]}
{\end{tikzpicture}\end{aligned}}
\def\strarr{length=2pt, width=3pt}
\tikzset{arrow/.style={decoration={
    markings,
    mark=at position #1 with \arrow{>[\strarr]}},
    postaction=decorate},
    reverse arrow/.style={decoration={
    markings,
    mark=at position #1 with {{\arrow{<[\strarr]}}}},
    postaction=decorate}
}
\tikzstyle{dot}=[circle, draw=black, fill=black!25, inner sep=.4ex]
\tikzstyle{blackdot}=[dot, fill=black!50]
\tikzstyle{whitedot}=[dot, fill=white]
\newif\ifvflip\pgfkeys{/tikz/vflip/.is if=vflip}
\newif\ifhflip\pgfkeys{/tikz/hflip/.is if=hflip}
\newif\ifhvflip\pgfkeys{/tikz/hvflip/.is if=hvflip}
\newlength\morphismheight
\newlength\wedgewidth
\tikzset{width/.initial=1mm}
\tikzstyle{morphism}=[font=\small,morphismshape]
\title{Space in Monoidal Categories}
\author{Pau Enrique Moliner
    \institute{University of Edinburgh}
    \email{pau.enrique.moliner@ed.ac.uk}
  \and Chris Heunen 
    \institute{University of Edinburgh}\thanks{Supported by EPSRC Fellowship EP/L002388/1. We thank Prakash Panangaden and Manny Reyes for discussions.}
    \email{chris.heunen@ed.ac.uk}
  \and Sean Tull
    \institute{University of Oxford}\thanks{Supported by EPSRC Studentship OUCL/2014/SET.}
    \email{sean.tull@cs.ox.ac.uk}
}
\begin{document}
\maketitle

\begin{abstract}
  The category of Hilbert modules may be interpreted as a naive quantum field theory over a base space. Open subsets of the base space are recovered as idempotent subunits, which form a meet-semilattice in any firm braided monoidal category. There is an operation of restriction to an idempotent subunit: it is a graded monad on the category, and has the universal property of algebraic localisation. Spacetime structure on the base space induces a closure operator on the idempotent subunits. Restriction is then interpreted as spacetime propagation. This lets us study relativistic quantum information theory using methods entirely internal to monoidal categories. As a proof of concept, we show that quantum teleportation is only successfully supported on the intersection of Alice and Bob's causal future.
\end{abstract}

\section{Introduction}

Categorical quantum mechanics reveals conceptual foundations of quantum theory by abstracting from Hilbert spaces to monoidal categories with various operationally motivated properties~\cite{abramskycoecke:categoricalsemantics,coeckekissinger:cqm,heunenvicary:cqm}. This minimal structure is already enough to recover many features of quantum theory. For example, endomorphisms of the tensor unit, also called scalars, play the role that complex numbers do for Hilbert spaces. 
This article highlights the spatial structure of morphisms into the tensor unit.

To illustrate this, consider Hilbert modules, which roughly replace the complex numbers in the definition of Hilbert space with complex-valued functions over a base space $X$. They form a monoidal category, whose morphisms into the tensor unit encode the spatial structure of $X$.
That was a very algebraic definition of Hilbert modules, but there is also a geometric interpretation. Roughly, Hilbert modules are equivalent to fields of Hilbert spaces over $X$: a bundle of Hilbert spaces $H_t$ that vary continuously with $t \in X$. Thus we may think of this category as (a naive version of) quantum field theory. We can study such settings entirely within monoidal categories.

We first concentrate on recovering the base space $X$ using purely categorical structure. The answer is given by subobjects $S$ of the tensor unit that are idempotent, in the sense that $S \otimes S \simeq S$. Such idempotent subunits correspond to open subsets of $X$. 
Idempotent subunits form a meet-semilattice in any braided monoidal category satisfying a mild condition we call `firmness'.
This gives a way to talk about `where' morphisms are `supported', and to restrict morphisms to smaller regions. We will prove that the operation of restriction is a graded monad, and that it has the universal property of algebraic localisation.

A natural next question is: what extra structure on $X$ could translate into categorical terms? So far we have thought of $X$ as a mere space, but what if it came equipped with a spacetime structure? 
We will show that this gives rise to a closure operator on the idempotent subunits, letting us model causal relationships categorically: the restriction operator is interpreted as spacetime propagation. As a proof of concept, we show that quantum teleportation is only succesfully supported on the intersection of Alice and Bob's causal futures.

\section{Hilbert modules}\label{sec:hilbertmodules}

We start with a brief section detailing our leading example, the category of Hilbert modules. For more information we refer to Appendix~\ref{sec:tensorsofhilbertmodules} and~\cite{lance1995hilbert,heunenreyes:frobenius}.

\begin{definition}\label{def:hilbertmodule}
  Fix a locally compact Hausdorff space $X$. It induces a commutative C*-algebra 
  \[
    C_0(X)=\{f \colon X \to \mathbb{C} \text{ continuous} \mid \forall \varepsilon>0\; \exists K \subseteq X \text{ compact} \colon |f(X \setminus K)|<\varepsilon \}\text.
  \]
  A \emph{Hilbert module} is a $C_0(X)$-module $E$ with a map $\inprod{-}{-} \colon E \times E \to C_0(X)$ that is $C_0(X)$-linear in the second variable, satisfies $\inprod{x}{y}=\inprod{y}{x}^*$, $\inprod{x}{x}\geq 0$ with equality only if $x=0$, and makes $E$ complete in the norm $\|x\|_E = \sup_{t \in X} \inprod{x}{x}(t)$~\cite{lance1995hilbert}.

  A function $f \colon E \to F$ between Hilbert modules is \emph{bounded} if $\|f(x)\|_F \leq \|f\|\|x\|_E$ for some $\|f\| \in \mathbb{R}$.
  Hilbert modules  and bounded $C_0(X)$-linear maps form a symmetric monoidal category $\cat{Hilb}_{C_0(X)}$: $E \otimes F$ is the completion of the algebraic tensor product over $C_0(X)$ under $\inprod{x \otimes y}{x' \otimes y'} = \inprod{x}{x'} \inprod{y}{y'}$~\cite[Proposition~2.2]{heunenreyes:frobenius}. The tensor unit is the Hilbert module $C_0(X)$ with $\inprod{f}{g}(t) = f(t)^* g(t)$.
\end{definition}

\begin{example}\label{ex:constanthilbertmodule}
  If $X=1$ then a Hilbert module is simply a Hilbert space.
  More generally, for any $t \in X$ there is a monoidal functor $\cat{Hilb}_{C_0(X)} \to \cat{Hilb}$~\cite[Proposition~2.5]{heunenreyes:frobenius}.
  In the other direction, there is a monoidal functor $C_0(X,-) \colon \cat{Hilb} \to \cat{Hilb}_{C_0(X)}$ that sends a Hilbert space $H$ to the Hilbert module
  \[
    C_0(X,H) = \{ f \colon X \to H \text{ continuous} \mid \forall \varepsilon>0 \; \exists K \subseteq X \text{ compact} \colon \|f(X \setminus K)\|<\varepsilon\}\text.
  \]
\end{example}

In particular, if $H$ is finite-dimensional and so has a \emph{dual object}~\cite{heunenvicary:cqm} $H^*$ in $\cat{Hilb}$, then $C_0(X,H)$ and $C_0(X,H^*)$ are dual objects in $\cat{Hilb}_{C_0(X)}$.
More generally, the dual objects in $\cat{Hilb}_{C_0(X)}$ are the finitely presented projective Hilbert modules~\cite[Theorem~5.5]{heunenreyes:frobenius}.

The above discussion is very algebraic in nature, and therefore lends itself well to categorical treatment. However, there is also a geometric description of Hilbert modules.

\begin{definition}
  A \emph{field of Hilbert spaces} is a continuous map $p \colon E \to X$ of topological spaces such that:
  \begin{itemize}
  \item all fibres $p^{-1}(t)$ for $t \in X$ are Hilbert spaces;
  \item addition is a continuous function $\{(x,y) \in E^2 \mid p(x)=p(y)\} \to E$;
  \item scalar multiplication is a continuous function $\mathbb{C} \times E \to E$;
  \item the inner product is a continuous function $\{(x,y) \in E^2 \mid p(x)=p(y)\} \to \mathbb{C}$;
  \item each $x_0 \in E$ has a continuous local section $s \colon U_0 \to E$ with $s(p(x_0))=x_0$ and $U_0 \subseteq X$, 
    and $x_0$ has a neighbourhood basis $p^{-1}(U) \cap \{x \in E \mid \forall t \in U \colon \|x - s(p(x))\|_t < \varepsilon \}$ of neighbourhoods $U\subseteq X$ of $p(x_0)$ and $\varepsilon>0$.
  \end{itemize}  
\end{definition}

  \[\begin{tikzpicture}
    \draw (0,0) to (5,0);
    \draw (2.5,1.5) ellipse (2.5 and 1);
    \draw[->>] (5.5,1.2) node[above]{$E$} to node[right]{$p$} (5.5,.3) node[below]{$X$};
    \draw[fill] (1,0) circle (.03) node[below]{$t$};
    \draw[thick] (1,.7) to node[right=-1mm]{$p^{-1}(t)$} (1,2.3);
      \draw[fill] (3,1.8) circle (.03);
      \draw[decoration={brace,mirror,raise=.5},decorate] (2.6,-.1) to node[below=1mm]{$U$} (3.4,-.1);    
      \draw[dashed,gray] (2.6,0) to (2.6,2.5);
      \draw[dashed,gray] (3.4,0) to (3.4,2.5);
      \draw (2.6, 1.6) to[in=180] (3,1.8) to[out=0,in=-160] (3.4,1.9);
      \draw[dashed,gray] (2.6, 1.8) to[in=180] (3,2.0) to[out=0,in=-160] (3.4,2.1);
      \draw[dashed,gray] (2.6, 1.4) to[in=180] (3,1.6) to[out=0,in=-160] (3.4,1.7);
      \draw[decoration={brace,mirror,raise=2},decorate] (3.4,1.7) to node[right=1mm]{$\varepsilon$} (3.4,2.1);
  \end{tikzpicture}\]
Intuitively, there is a Hilbert space for each $t \in X$, that `varies continuously' with $t$. 
By the following we may think of Hilbert modules as naive quantum field theories, see~\cite[Theorem~4.7]{heunenreyes:frobenius} for a proof.

\begin{theorem}
  The category $\cat{Hilb}_{C_0(X)}$ is equivalent to that of fields of Hilbert spaces over $X$ (with morphisms being fibre-wise linear bundle maps).
\end{theorem}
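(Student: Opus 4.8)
The plan is to exhibit a pair of functors between $\cat{Hilb}_{C_0(X)}$ and the category of fields of Hilbert spaces and show they are mutually inverse up to natural isomorphism. First I would build the functor $\Phi$ sending a module to a field. Given a Hilbert module $E$, for each $t \in X$ set $N_t = \{x \in E \mid \inprod{x}{x}(t) = 0\}$; since $f \in C_0(X)$ gives $\inprod{fx}{fx}(t) = |f(t)|^2 \inprod{x}{x}(t)$ and by Cauchy--Schwarz, $N_t$ is a closed submodule, and the quotient $E_t = E/N_t$ carries a genuine inner product $\inprod{[x]}{[y]}_t = \inprod{x}{y}(t)$. Completing yields a Hilbert space, the fibre over $t$. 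I would then form the total space $\coprod_{t \in X} E_t$ with the evident projection $p$, topologised so that each $x \in E$ gives a continuous section $\hat{x}\colon t \mapsto [x]_t$; concretely one takes the neighbourhood basis of the field definition built from the $\hat{x}$ and $\varepsilon$-balls. Verifying the field axioms is then mostly bookkeeping: continuity of addition, scalar multiplication, and the inner product all reduce to continuity of the functions $\inprod{x}{y} \in C_0(X)$, while the local-section and neighbourhood-basis axioms hold by construction because the $\hat{x}$ separate and exhaust the fibres.

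Second, I would build the functor $\Gamma$ in the other direction. Given a field $p \colon \mathcal{E} \to X$, let $\Gamma(\mathcal{E})$ be the continuous sections vanishing at infinity, with pointwise module action and inner product $\inprod{s}{s'}(t) = \inprod{s(t)}{s'(t)}_t$. The field axioms ensure this lands in $C_0(X)$, and completeness in the norm $\|s\| = \sup_t \inprod{s}{s}(t)^{1/2}$ follows from fibrewise completeness together with a uniform-convergence argument. On arrows, a fibre-wise linear bundle map $g$ induces $s \mapsto g \circ s$, and a module map $f$ induces the fibre-wise maps $[x] \mapsto [f(x)]$; checking these are well defined, bounded, and continuous is routine.

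Third come the two natural isomorphisms. The unit $E \to \Gamma(\Phi(E))$, $x \mapsto \hat{x}$, is isometric since $\|\hat{x}\|^2 = \sup_{t} \inprod{x}{x}(t) = \|x\|_E^2$, hence injective with closed image; the counit $\Phi(\Gamma(\mathcal{E})) \to \mathcal{E}$ is induced on fibres by evaluation $s \mapsto s(t)$. Proving that both are isomorphisms reduces to two surjectivity statements: that every continuous section vanishing at infinity is a uniform limit of sections $\hat{x}$, and dually that every fibre vector is the value of some global section.

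This last gluing is the \emph{main obstacle}. I would handle it with a partition-of-unity argument exploiting that $X$ is locally compact Hausdorff, so $C_0(X)$ has enough bump functions: locally any section is approximated by some $\hat{x}$ via the neighbourhood-basis axiom, and one patches these local approximations using a subordinate partition of unity while controlling the error in sup-norm. The same bump functions cut a local section down to a global one that vanishes at infinity yet attains a prescribed value at a point, giving fibre-wise surjectivity. Once density and surjectivity are established, the isometry upgrades to an isomorphism of Hilbert modules and the bundle map to a homeomorphism of total spaces; combined with the functoriality of $\Phi$ and $\Gamma$ and the naturality of these isomorphisms, this delivers the claimed equivalence of categories.
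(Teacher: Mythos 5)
Your proposal is correct and follows essentially the same route as the proof the paper relies on: the theorem is deferred to Heunen--Reyes (Theorem~4.7), which is precisely this Takahashi-style duality --- fibres $E_t = E/N_t$ with inner product $\inprod{[x]}{[y]}_t = \inprod{x}{y}(t)$, the section functor $\Gamma$ of continuous sections vanishing at infinity, and the partition-of-unity/bump-function gluing available because $X$ is locally compact Hausdorff. One caveat worth making explicit rather than calling routine: for $\Gamma$ to be well defined on arrows, the bundle maps must be required to be \emph{uniformly} bounded across fibres (as in the cited source), since a merely continuous fibrewise-linear bundle map can have $\sup_t \|g_t\| = \infty$ (e.g.\ $g_t = t \cdot \mathrm{id}$ over discrete $X = \mathbb{N}$), in which case $s \mapsto g \circ s$ need not preserve vanishing at infinity or boundedness.
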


\section{Subunits}

Can we recognise the spatial structure of $X$ from the categorical structure of $\cat{Hilb}_{C_0(X)}$? The answer is yes, and this section discusses how.
Recall that a \emph{subobject} of an object $E$ is an equivalence class of monomorphisms $s \colon S \rightarrowtail E$, where $s$ and $t$ are identified if they factor through each other. Call subobjects of the tensor unit $I$ \emph{subunits}.

\begin{definition}\label{def:idempotent}
  A subobject $s \colon S \rightarrowtail E$ is \emph{idempotent} when $s \otimes \id[S] \colon S \otimes S \to E \otimes S$ is an isomorphism. 
  An object $E$ is \emph{firm} for a subunit $s$ when $s \otimes \id[E]$ is monic.
  We call a monoidal category \emph{firm} when $S$ is firm for $t$ for any subunits $s \colon S \rightarrowtail I$ and $t \colon T \rightarrowtail I$.
\end{definition}

Any compact category is firm. 
Cartesian categories are firm, any subunit is idempotent and corresponds precisely to a subterminal object; hence in the category of sheaves over a topological space $X$, the subunits are precisely the open subsets of $X$.
In Appendix~\ref{sec:firmrings} we discuss algebraic examples, but our motivating example is the following; for a proof see Appendix~\ref{sec:tensorsofhilbertmodules}.

\begin{theorem}\label{thm:hilbertmodules}
  The monoidal category $\cat{Hilb}_{C_0(X)}$ is firm, and the idempotent subunits are
  \begin{equation}\label{eq:subunithilbertmodules}
    \{ f \in C_0(X) \mid f(X \setminus U)=0 \} \simeq C_0(U)
  \end{equation}
  for open subsets $U \subseteq X$.
\end{theorem}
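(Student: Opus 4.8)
The plan is to prove the two assertions with a common toolkit: the stated equivalence of $\cat{Hilb}_{C_0(X)}$ with fields of Hilbert spaces, and, for each point $x \in X$, the monoidal fibre functor $(-)_x \colon \cat{Hilb}_{C_0(X)} \to \cat{Hilb}$ of Example~\ref{ex:constanthilbertmodule}, under which $I_x = \mathbb{C}$ and $(E \otimes F)_x \cong E_x \otimes F_x$. Identifying $I$ with $C_0(X)$, I would first do the forward direction. Fix open $U \subseteq X$; the set on the left of~\eqref{eq:subunithilbertmodules} is the closed ideal $C_0(U) \subseteq C_0(X)$, and I would show its inclusion $s \colon C_0(U) \rightarrowtail C_0(X)$ is idempotent. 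Composing $s \otimes \id[C_0(U)]$ with the unitor identifies it with multiplication $m \colon C_0(U) \otimes C_0(U) \to C_0(U)$, $f \otimes g \mapsto fg$. A one-line computation gives $\inprod{m(f \otimes g)}{m(f' \otimes g')} = \bar{f}\bar{g}f'g' = \inprod{f \otimes g}{f' \otimes g'}$, so $m$ is isometric, and it is surjective because every element of the C*-algebra $C_0(U)$ factors as a product (bounded approximate unit / Cohen factorisation); an isometry with dense range extends to an isomorphism of the completions, so $s$ is idempotent. Gelfand duality makes $U \mapsto C_0(U)$ injective on open sets, so distinct $U$ yield distinct subunits.

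Conversely, let $s \colon S \rightarrowtail I$ be any idempotent subunit. Since $(-)_x$ is strong monoidal it sends $s$ to an idempotent morphism $s_x \colon S_x \to \mathbb{C}$ in $\cat{Hilb}$, so $v \otimes w \mapsto s_x(v)w$ is an isomorphism $S_x \otimes S_x \to S_x$. Injectivity of this map forces $\ker s_x = 0$, since a nonzero $v \in \ker s_x$ would make every $v \otimes w$ a nonzero kernel element; hence $s_x$ is injective and $\dim S_x \le 1$, with $S_x \neq 0$ exactly when $s_x$ is an isomorphism onto $\mathbb{C}$. Setting $U := \{x \mid S_x \neq 0\}$, I would show $U$ is open using the continuous local sections of the field $S$, and that over $U$ the map $s$ is a fibrewise isomorphism onto the trivial line field $\mathbb{C} \times U$ while $S_x = 0$ off $U$; this exhibits $S \cong C_0(U)$ with $s$ equivalent, as a subobject, to the ideal inclusion of the forward direction.

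I expect the main obstacle to be precisely this reconstruction: promoting the pointwise statement ``each fibre is $\mathbb{C}$ or $0$'' to an honest isomorphism of subobjects, which means proving $U$ open and upgrading the fibrewise isomorphism to a bundle (hence module) isomorphism using continuity of sections. Firmness is logically separate and needs care, because a subunit need not be idempotent and its fibre maps may degenerate: for instance, multiplication by a function $a \in C_0(X)$ whose cozero set is dense but not all of $X$ is a monic, non-idempotent subunit with $a(x)=0$ on a nonempty closed set. For firmness I would again argue fibrewise: given subunits $s \colon S$ and $t \colon T$, the map $t \otimes \id[S] \colon T \otimes S \to I \otimes S$ has fibres $t_x \otimes \id[S_x]$, and tensoring the injection $t_x$ with $\id[S_x]$ stays injective wherever $t_x$ is injective (vector spaces are flat). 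The crux is that for a monic $t$ the locus where $t_x$ fails to be injective is nowhere dense; granting this, any kernel section of $t \otimes \id[S]$ vanishes on a dense set and so is zero by continuity, whence $t \otimes \id[S]$ is monic and the category is firm. Establishing that the degeneracy locus is nowhere dense — essentially a continuous-selection argument exploiting commutativity of $C_0(X)$ together with monicity of $t$ — is the second genuine technical point.
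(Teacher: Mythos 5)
Your forward direction is essentially the paper's: it too realises $C_0(U)$ as the closed ideal $\{f \mid f(X\setminus U)=0\}$ and proves idempotence by the isometry-plus-dense-range computation (its Lemma~\ref{lem:EtimesCo}). The genuine gap is in your converse, at precisely the step you flag as the main obstacle. From idempotence you extract only fibrewise data: $\dim S_x \leq 1$, injectivity of $s_x$, openness of $U = \{x \mid S_x \neq 0\}$, and that $s$ is a fibrewise isomorphism over $U$ and zero off it. But this fibrewise profile does not determine the subobject, so no continuity-of-sections argument can finish from there. Concretely, take $X=[0,1]$ and let $S$ be the completion of $C_c((0,1])$ under $\inprod{f}{g}(x) = \overline{f(x)}g(x)\,x$, with $s(f)(x) = x f(x)$. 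This is a bounded injective (hence monic) module map, so a subunit; one checks $S_x \simeq \mathbb{C}$ with $s_x \neq 0$ for $x>0$, while $S_0 = 0$ (a weighted variant of the paper's own Appendix example, arranged so that the fibre at $0$ vanishes). Thus every condition your pointwise analysis derives holds with $U=(0,1]$, yet $s$ is not equivalent to $C_0(U) \rightarrowtail C_0(X)$: the function $\sqrt{x}$ lies in the ideal $C_0(U)$, but its unique preimage $x^{-1/2}$ fails $|f(x)|\sqrt{x} \to 0$ as $x \to 0$ and so lies outside $S$. Of course this $s$ is not idempotent --- simple tensors of bumps of height $\varepsilon^{-1/2}$ supported near $x = \varepsilon$ have norm $\approx 1$ in $S \otimes S$ but image of norm $\approx \sqrt{\varepsilon}$, so $(s \otimes \id[S])^{-1}$ would be unbounded --- but that is exactly the point: a fibrewise isomorphism of fields is an isomorphism of Hilbert modules only given a \emph{uniform} lower bound on the fibre maps, and your argument, having spent idempotence pointwise, produces no such bound.

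The paper closes this hole by using idempotence globally rather than fibrewise: boundedness of the inverse of the multiplication isomorphism $S \otimes S \to S$ yields, for $g \in s(S)$, the estimate $\|g\|_S^4 = \|g \otimes g\|_S^2 \leq \|\rho_S^{-1}\|^2 \|g^2\|_S \leq \|\rho_S^{-1}\|^2\|g\|_S^2\|g\|_{C_0(X)}^2$, so the two norms are equivalent on $s(S)$, which is therefore complete, hence a \emph{closed} ideal of $C_0(X)$, i.e.\ of the form $C_0(U)$ --- no bundle reconstruction needed. If you wish to keep the geometric route, you must first extract a uniform bound of this kind from idempotence before passing to fibres. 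Two smaller remarks: the paper's printed proof in fact establishes only the bijective correspondence and leaves firmness implicit, so your firmness argument is an addition rather than a deviation; but as you yourself note, it rests on the unproven claim that the non-injectivity locus of an arbitrary monic subunit is nowhere dense, which is delicate because fibres of non-idempotent subunits need not be at most one-dimensional and kernels of the fibre maps need not vary continuously. That claim would need an actual proof before the firmness half could be considered complete.
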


Many spatial properties of topological spaces like $X$ hold in more general monoidal categories than $\cat{Hilb}_{C_0(X)}$.
This section shows that in an arbitrary firm braided monoidal category, the idempotent subunits form a meet-semilattice. Many results are adapted from the work of Boyarchenko and Drinfeld~\cite{boyarchenkodrinfeld:idempotents}, including the following useful observation.

\begin{lemma} 
  \label{lem:splitmonosubunits}
  Let $m \colon E \to F$ and $e \colon F \to E$ satisfy $e \circ m = \id[E]$, and $s \colon S \rightarrowtail I$ be an idempotent subunit.
  If $\id[F] \otimes s$ is an isomorphism, then so is $\id[E] \otimes s$.
\end{lemma}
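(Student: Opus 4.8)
The plan is to produce an explicit two-sided inverse for $\id[E] \otimes s$ by transporting the known inverse of $\id[F] \otimes s$ along the retraction $(m,e)$. The whole argument rests on a single naturality observation, and in fact uses neither the idempotency of $s$ nor firmness: what is really being proved is that a retract of an isomorphism is again an isomorphism, packaged inside the arrow category of $\cat{C}$.

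First I would record the naturality of tensoring with $s$. Writing $\phi_A \colon A \otimes S \to A$ for the map $\id[A] \otimes s$ (post-composed with the right unitor $A \otimes I \cong A$), the interchange law together with naturality of the unitor gives, for every morphism $f \colon A \to B$, the identity $f \circ \phi_A = \phi_B \circ (f \otimes \id[S])$. Applying this to $m \colon E \to F$ and to $e \colon F \to E$ yields the two squares $m \circ \phi_E = \phi_F \circ (m \otimes \id[S])$ and $e \circ \phi_F = \phi_E \circ (e \otimes \id[S])$. These are the only structural facts needed, and they hold in any monoidal category.

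Now $\phi_F = \id[F] \otimes s$ is invertible by hypothesis, so I define the candidate inverse
\[
  \psi_E := (e \otimes \id[S]) \circ \phi_F^{-1} \circ m \colon E \to E \otimes S.
\]
To check $\phi_E \circ \psi_E = \id[E]$, I substitute $\phi_E \circ (e \otimes \id[S]) = e \circ \phi_F$ from the second square, so the composite collapses to $e \circ \phi_F \circ \phi_F^{-1} \circ m = e \circ m = \id[E]$. To check $\psi_E \circ \phi_E = \id[E \otimes S]$, I substitute $m \circ \phi_E = \phi_F \circ (m \otimes \id[S])$ from the first square, so the composite collapses to $(e \otimes \id[S]) \circ (m \otimes \id[S]) = (e \circ m) \otimes \id[S] = \id[E \otimes S]$. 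Hence $\psi_E$ is a two-sided inverse and $\id[E] \otimes s = \phi_E$ is an isomorphism.

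There is no serious obstacle here: once the two naturality squares are in place, the verification is a two-line diagram chase. The only points requiring a little care are, first, keeping track of the right-unitor identifications $A \otimes I \cong A$ so that the two squares are literally commutative rather than merely commutative up to coherence, and second, spotting the correct candidate $\psi_E$; but the latter is forced, being exactly the inverse dictated by realising $\phi_E$ as a retract of the isomorphism $\phi_F$ in the arrow category.
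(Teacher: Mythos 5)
Your proof is correct and is essentially the paper's own argument: the paper likewise observes that $m\otimes\id$ and $e\otimes\id$ give a retract of the isomorphism $\id[F]\otimes s$ in the arrow category and writes down the same explicit inverse $(e\otimes\id[S])\circ(\id[F]\otimes s)^{-1}\circ(m\otimes\id[I])$, differing from your $\psi_E$ only by the unitor identifications you already flag. Your added remark that idempotency of $s$ is never used is also consistent with the paper's proof, which indeed only uses the split mono structure.
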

\begin{proof}
  Both rows below compose to the identity, and the middle vertical arrow is an isomorphism.
  \[\begin{tikzcd}
    E \otimes S \rar{m \otimes \id[S]} \dar{\id[E] \otimes s}
    & F \otimes S \rar{e \otimes \id[S]} \arrow{d}{\id[F] \otimes s}[swap]{\simeq}
    & E \otimes S \dar{\id[E] \otimes s} \\
    E \otimes I \rar{m \otimes \id[I]}
    & F \otimes I \rar{e \otimes \id[I]}
    & E \otimes I
  \end{tikzcd}\]
  Hence $\id[E] \otimes s$ is an isomorphism with inverse $(e \otimes \id[S]) \circ (\id[F] \otimes s)^{-1} \circ (m \otimes \id[I])$.
\end{proof}

\begin{proposition} \label{prop:subunits_less}
  One idempotent subunit $s$ factors through another $t$ if and only if $\id[S] \otimes t$ is an isomorphism, or equivalently, $t \otimes \id[S]$ is an isomorphism. 
\end{proposition}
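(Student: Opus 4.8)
The statement bundles three conditions: (i) $s$ factors through $t$; (ii) $\id[S] \otimes t$ is an isomorphism; (iii) $t \otimes \id[S]$ is an isomorphism. The plan is to dispatch the equivalence (ii) $\Leftrightarrow$ (iii) immediately from the braiding, and then prove (i) $\Rightarrow$ (ii) and (ii) $\Rightarrow$ (i). For (ii) $\Leftrightarrow$ (iii), naturality of the braiding $\sigma$ gives $\sigma_{I,S} \circ (t \otimes \id[S]) = (\id[S] \otimes t) \circ \sigma_{T,S}$; since both instances of $\sigma$ are isomorphisms, $\id[S] \otimes t$ is invertible exactly when $t \otimes \id[S]$ is. The same naturality trick, applied to $s$ in place of $t$, converts the idempotency hypothesis ``$s \otimes \id[S]$ iso'' into ``$\id[S] \otimes s$ iso'', and converts firmness (``$t \otimes \id[S]$ monic'') into ``$\id[S] \otimes t$ monic''; I will use both of these below.

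For (i) $\Rightarrow$ (ii), suppose $s = t \circ f$ for some (necessarily monic) $f \colon S \to T$. Tensoring on the left with $\id[S]$ gives $\id[S] \otimes s = (\id[S] \otimes t) \circ (\id[S] \otimes f)$. The left-hand side is an isomorphism by idempotency of $s$ via the braiding remark above, so $\id[S] \otimes t$ is a split epimorphism. By firmness it is also monic, and a morphism that is simultaneously monic and split epic is an isomorphism; this gives (ii).

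For (ii) $\Rightarrow$ (i), I would exhibit the factorisation explicitly. Writing $\lambda, \rho$ for the unitors, set
\[ f \;=\; \lambda_T \circ (s \otimes \id[T]) \circ (\id[S] \otimes t)^{-1} \circ \rho_S^{-1} \colon S \to T, \]
which is well defined precisely because $\id[S] \otimes t$ is assumed invertible. To check $t \circ f = s$, I would push $t$ through the left unitor by naturality, rewrite $(\id[I] \otimes t) \circ (s \otimes \id[T]) = s \otimes t = (s \otimes \id[I]) \circ (\id[S] \otimes t)$ so that the factor $\id[S] \otimes t$ cancels its inverse, and finish using naturality of $\rho$ together with the coherence identity $\lambda_I = \rho_I$. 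Since $t$ is monic, producing any such $f$ witnesses that $s$ factors through $t$.

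The routine parts are the braiding manipulations and the unitor bookkeeping in the last display. The genuinely load-bearing step is the forward direction's appeal to firmness: the factorisation only hands us $\id[S] \otimes t$ as a split epimorphism, and it is firmness (transported across the braiding) that upgrades ``split epic'' to ``monic, hence iso''. I therefore expect that to be the crux rather than the coherence computation, and it explains why the ambient category is assumed firm and braided.
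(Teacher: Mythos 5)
Your proof is correct, and your direction (ii) $\Rightarrow$ (i) is essentially the paper's: the explicit factorisation $f = \lambda_T \circ (s \otimes \id[T]) \circ (\id[S] \otimes t)^{-1} \circ \rho_S^{-1}$ is precisely what the paper's commuting diagram extracts, and the braiding argument for (ii) $\Leftrightarrow$ (iii) is the same implicit step the paper takes. Your forward direction, however, is a genuinely different route. You tensor $s = t \circ f$ with $\id[S]$ to exhibit $\id[S] \otimes t$ as a split epimorphism (using idempotence of $s$, transported across the braiding), then invoke firmness to get monicity and conclude isomorphism from ``monic plus split epic''. The paper instead sets $g = (\id[S] \otimes f)\circ(\id[S]\otimes s)^{-1}\circ\rho_S^{-1}$, checks $\rho_S \circ (\id[S]\otimes t) \circ g = \id[S]$ so that $S$ is a retract of $S \otimes T$, notes that idempotence of \emph{$t$} makes $\id[S\otimes T]\otimes t$ invertible, and transfers invertibility along the retract via Lemma~\ref{lem:splitmonosubunits} --- so it starts from the same tensored equation but finishes differently. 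The trade-off: your argument uses idempotence only of $s$ but needs firmness; the paper's uses idempotence of both $s$ and $t$ but no firmness at all, so the proposition actually holds in any braided monoidal category. This also means your closing diagnosis is off relative to the paper: firmness is not load-bearing here (it becomes essential later, e.g.\ in showing $s \otimes t$ is again a subunit in the monoid of idempotent subunits), and the reusable crux of the paper's proof is the retract lemma, which reappears in the proof of Proposition~\ref{prop:local}. Your route is a fine, arguably more self-contained alternative under the section's standing hypotheses, but it establishes a strictly less general statement.
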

\begin{proof}
  Suppose $s=t \circ f$.
  Set $g=(\id[S] \otimes f) \circ (\id[S] \otimes s)^{-1} \circ \rho_S^{-1} \colon S \to S \otimes T$. Then
  \[
    \rho_S \circ (\id[S] \otimes t) \circ g
    = \rho_S \circ (\id[S] \otimes s) \circ (\id[S] \otimes s)^{-1} \circ {\rho_S}^{-1} 
    = \id[S]\text.
  \]
  Idempotence of $t$ makes $\id[S \otimes T] \otimes t \colon (S \otimes T) \otimes T \to T$ an isomorphism. Hence, by Lemma~\ref{lem:splitmonosubunits}, so is $\id[S] \otimes t$. 

  Conversely, suppose $\id[S] \otimes t$ is an isomorphism. Because the following diagram  commutes
  \[\begin{tikzcd}
    S \otimes T \dar{\id[S] \otimes t} \rar{s \otimes \id[T]}
    & I \otimes T \dar{\id[I] \otimes t} \rar{\rho_T}
    & T \dar{t} \\
    S \otimes I \rar{s \otimes \id[I]}
    & I \otimes I \rar{\rho_I}
    & I
  \end{tikzcd}\]
  the bottom row $s$ factors through the right vertical arrow $t$.
\end{proof}

\begin{corollary} 
  \label{cor:subunitdomain}
  If $s, s' \colon S \rightarrowtail I$ are idempotent subunits, $s' = s \circ f$ for a unique $f$, which is an isomorphism.
\end{corollary}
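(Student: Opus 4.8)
The plan is to show that $s$ and $s'$ each factor through the other, and then to extract the isomorphism by monic cancellation. The crucial observation is that both subunits share the \emph{same} domain $S$, so idempotence of one immediately supplies the hypothesis needed to invoke Proposition~\ref{prop:subunits_less}.

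First I would verify that $s$ factors through $s'$. By Proposition~\ref{prop:subunits_less}, applied to the subunit $s$ and the second subunit $s'$ (whose domain is likewise $S$), it suffices to check that $s' \otimes \id[S]$ is an isomorphism. But this is exactly the idempotence condition for $s'$ from Definition~\ref{def:idempotent}, since the domain of $s'$ is $S$. Hence $s$ factors through $s'$, say $s = s' \circ g$. Symmetrically, idempotence of $s$ makes $s \otimes \id[S]$ an isomorphism, so $s'$ factors through $s$, yielding the desired $f$ with $s' = s \circ f$.

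Next I would settle uniqueness and invertibility. Uniqueness of $f$ is immediate, as $s$ is monic: $s \circ f = s \circ f'$ forces $f = f'$. For invertibility I would substitute the two factorisations into one another. From $s' = s \circ f$ and $s = s' \circ g$ we obtain $s = s' \circ g = s \circ (f \circ g)$, and cancelling the monomorphism $s$ gives $f \circ g = \id[S]$; the symmetric substitution $s' = s' \circ (g \circ f)$ followed by cancellation of $s'$ gives $g \circ f = \id[S]$. Thus $f$ is an isomorphism, with inverse $g$.

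I expect no serious obstacle here; the only point demanding care is the bookkeeping in the first step, namely recognising that Proposition~\ref{prop:subunits_less} takes as input precisely the tensor-isomorphism that the definition of an idempotent subunit hands us for free once the two domains coincide. Everything thereafter is routine monic cancellation.
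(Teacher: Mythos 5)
Your proof is correct and follows essentially the same route as the paper, which likewise deduces mutual factorisation from Proposition~\ref{prop:subunits_less} using the fact that idempotence of each subunit supplies the required tensor-isomorphism on the shared domain $S$. The paper states this in one line and leaves the monic-cancellation bookkeeping (uniqueness and invertibility of $f$) implicit, which you have simply spelled out.
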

\begin{proof}
  This follows from Proposition~\ref{prop:subunits_less} because $\id[S] \otimes s'$ and $s \otimes \id[S]$ are isomorphisms.
\end{proof}

\begin{proposition}
  The idempotent subunits in a firm braided monoidal category form an idempotent commutative monoid under $\otimes$ with unit $\id[I]$.
  \[
    \big(s \colon S \rightarrowtail I\big) \otimes \big(t \colon T \rightarrowtail I\big) = \big(\lambda_I \circ (s \otimes t) \colon S \otimes T \to I\big)
  \]
\end{proposition}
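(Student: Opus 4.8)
The plan is to verify, in turn, that the operation is well defined on subunits, that the result $s \otimes t$ is again an \emph{idempotent} subunit (closure), and finally that commutativity, associativity, the unit law, and the idempotence law $s \otimes s \simeq s$ all hold at the level of subobjects.

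First I would check that $u := \lambda_I \circ (s \otimes t) \colon S \otimes T \to I$ is monic, so that it represents a genuine subunit. Writing $s \otimes t = (\id[I] \otimes t) \circ (s \otimes \id[T])$, the factor $s \otimes \id[T]$ is monic because the category is firm (Definition~\ref{def:idempotent}), while $\id[I] \otimes t$ is monic since $\lambda_I \circ (\id[I] \otimes t) = t \circ \lambda_T$ is a composite of a mono with an isomorphism; hence $u$ is monic. Independence of the chosen representatives is immediate, since replacing $s,t$ by $s \circ a$, $t \circ b$ with $a,b$ isomorphisms only postcomposes $s \otimes t$ with the isomorphism $a \otimes b$.

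The crux is closure: showing $u$ is idempotent, i.e. (equivalently, by naturality of the braiding $\sigma$) that $\id[S \otimes T] \otimes u$ is an isomorphism. Stripping off the isomorphism $\id[S \otimes T] \otimes \lambda_I$, it suffices to show $\id[S \otimes T] \otimes (s \otimes t)$ is an isomorphism. Here I would use the braiding to sort the four tensor factors, conjugating this map by the coherence isomorphism transporting $(S \otimes T) \otimes (S \otimes T)$ to $(S \otimes S) \otimes (T \otimes T)$; by naturality of $\sigma$ and of the associators, this conjugate is, up to coherence isomorphisms, the tensor $(\id[S] \otimes s) \otimes (\id[T] \otimes t)$. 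Since $s$ and $t$ are idempotent, each of $\id[S] \otimes s$ and $\id[T] \otimes t$ is an isomorphism (the braided reformulation of idempotence, again via naturality of $\sigma$), and hence so is their tensor. The main obstacle is exactly this bookkeeping: verifying that the braidings, associators, and unitors assemble so that the sorted map really is the factorwise tensor of the two idempotence isomorphisms, rather than something off by a stray coherence cell.

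Finally I would dispatch the algebraic laws, in each case producing an explicit isomorphism between subunit domains and invoking Corollary~\ref{cor:subunitdomain} to know it is automatic and unique, so that the identities descend to subobjects. Commutativity follows from naturality of the braiding together with the coherence identity $\lambda_I \circ \sigma_{I,I} = \lambda_I$, giving $\lambda_I \circ (t \otimes s) \circ \sigma_{S,T} = \lambda_I \circ \sigma_{I,I} \circ (s \otimes t) = \lambda_I \circ (s \otimes t)$; associativity is the same argument with the associator $\alpha$ in place of $\sigma$. The unit law reduces to the naturality identity $\lambda_I \circ (\id[I] \otimes s) = s \circ \lambda_S$, exhibiting $\id[I] \otimes s$ as $s$ precomposed with the isomorphism $\lambda_S$. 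The idempotence law is equally direct: $\lambda_I \circ (s \otimes s) = s \circ \lambda_S \circ (s \otimes \id[S])$ realises $s \otimes s$ as $s$ precomposed with $\lambda_S \circ (s \otimes \id[S])$, which is an isomorphism precisely because $s$ is idempotent.
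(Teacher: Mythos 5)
Your proof is correct and follows essentially the same route as the paper's: the same factorisation $s \otimes t = (\id[I] \otimes t) \circ (s \otimes \id[T])$ with firmness supplying monicity, the same braiding-and-coherence bookkeeping for idempotence of $s \otimes t$ (which the paper merely asserts is ``easily seen''), and the same naturality identities for the monoid laws, your idempotence identity $\lambda_I \circ (s \otimes s) = s \circ \lambda_S \circ (s \otimes \id[S])$ being a trivial variant of the paper's $s \circ \rho_S \circ (\id[S] \otimes s)$. One wording slip only: replacing $s, t$ by $s \circ a$, $t \circ b$ \emph{pre}composes $s \otimes t$ with the isomorphism $a \otimes b$, rather than postcomposing it.
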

\begin{proof}
  First observe that $s \otimes t = (\id[I] \otimes t) \circ (s \otimes \id[T])$ is indeed a subunit, because $\id[I] \otimes t = \lambda_I^{-1} \circ t \circ \lambda_T$ is monic, and $s \otimes \id[T]$ is monic by firmness.
  It is easily seen to be idempotent using the braiding.

  Next, $\id[I] \otimes s = \lambda_I \circ (\id[I] \otimes s) = s \circ \lambda_S$  represents the same subobject as $s$. Similarly $\id[I] \otimes s$ represents the same subobject as $s$ because $\rho_I=\lambda_I$.
  An analogous argument using coherence establishes associativity. For commutativity, use the braiding $\sigma_{S,T}$ to observe that $s \otimes t$ and $t \otimes s$ represent the same subobject. For idempotence note that $s \otimes s$ and $s$ represent the same subobject because $\lambda_I \circ (s \otimes s) = s \circ \rho_S \circ (\id[S] \otimes s)$.
\end{proof}

\begin{corollary}
  The idempotent subunits in a firm braided monoidal category form a meet-semilattice under the usual order of subobjects, with meet given by $\otimes$, and largest element $\id[I]$.
  \[
  \begin{pic}[xscale=2,yscale=1.5]
    \node (s) at (0,0) {$S$};
    \node (i) at (1,.5) {$I$};
    \node (t) at (0,1) {$T$};
    \draw[>->] (s) to node[below]{$s$} (i);
    \draw[>->] (t) to node[above]{$t$} (i);
    \draw[>->,dashed] (s) to (t);
  \end{pic}
  \qquad \iff \qquad
  \begin{pic}[xscale=2,yscale=1.5]
    \node (s) at (0,0) {$S$};
    \node (i) at (1,0) {$I$};
    \node (st) at (0,1) {$S \otimes T$};
    \node (ii) at (1,1) {$I \otimes I$};
    \draw[>->] (s) to node[below]{$s$} (i);   
    \draw[>->] (st) to node[above]{$s \otimes t$} (ii);
    \draw[->] (ii) to node[right]{$\lambda_I$} node[left]{$\simeq$} (i);
    \draw[->,dashed] (s) to node[left]{$\simeq$} (st);
  \end{pic}
  \]  
\end{corollary}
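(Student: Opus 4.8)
The plan is to bootstrap from the previous Proposition, which already gives that the idempotent subunits form an idempotent commutative monoid $(\otimes, \id[I])$, and to combine it with the standard dictionary between idempotent commutative monoids and meet-semilattices. Recall that any idempotent commutative monoid carries a canonical order, $s \le t \iff s \otimes t = s$, for which $\otimes$ is the meet and the unit is the top element. These verifications are routine: $s \otimes t \le s$ because $(s \otimes t) \otimes s = s \otimes t$ by idempotence and commutativity, while any common lower bound $u$ of $s$ and $t$ satisfies $u \otimes (s \otimes t) = (u \otimes s) \otimes t = u \otimes t = u$, so $u \le s \otimes t$; and $s \otimes \id[I] = s$ makes $\id[I]$ largest.

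The substantive step is to show that this algebraically defined order coincides with the usual order of subobjects, which is exactly the displayed biconditional: $s$ factors through $t$ if and only if $s \otimes t$ and $s$ represent the same subobject. First I would prove the forward direction using Proposition~\ref{prop:subunits_less}. If $s$ factors through $t$, then $\id[S] \otimes t$ is an isomorphism, hence so is $\rho_S \circ (\id[S] \otimes t) \colon S \otimes T \to S$. A short coherence calculation --- factoring $s \otimes t = (s \otimes \id[I]) \circ (\id[S] \otimes t)$ and using naturality of $\rho$ together with $\lambda_I = \rho_I$ --- gives $\lambda_I \circ (s \otimes t) = s \circ \rho_S \circ (\id[S] \otimes t)$, which is precisely the commuting square with the isomorphism $S \to S \otimes T$ on the right of the statement, so $s \otimes t \simeq s$.

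For the converse I would argue directly: if $s \otimes t \simeq s$ then $s$ factors through the subunit $\lambda_I \circ (s \otimes t)$, and the latter factors through $t$ since naturality of $\lambda$ yields $\lambda_I \circ (s \otimes t) = t \circ \lambda_T \circ (s \otimes \id[T])$; composing these two factorizations shows $s$ factors through $t$. With the biconditional established the two orders agree, so the meet-semilattice structure of the monoid transports verbatim to the subobject order, with meet $\otimes$ and top $\id[I]$.

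I expect the main obstacle to be the coherence bookkeeping in the forward direction, namely verifying $\lambda_I \circ (s \otimes t) = s \circ \rho_S \circ (\id[S] \otimes t)$ and matching it against the square in the statement; by contrast the monoid-to-lattice dictionary and the converse factorization are straightforward once Proposition~\ref{prop:subunits_less} and the earlier monoid structure are invoked.
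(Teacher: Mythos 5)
Your proposal is correct, and its overall decomposition matches the paper's: both build on the previous Proposition's idempotent commutative monoid and reduce the corollary to the displayed order-equivalence. Your forward direction is essentially verbatim the paper's: the paper's proof also uses Proposition~\ref{prop:subunits_less} to get $\id[S]\otimes t$ invertible and then the identity $s = \lambda_I \circ (s\otimes t)\circ(\id[S]\otimes t)^{-1}\circ\rho_S^{-1}$, which is your coherence computation $\lambda_I \circ (s \otimes t) = s \circ \rho_S \circ (\id[S]\otimes t)$ rearranged, so the step you flagged as the main obstacle is sound and is exactly what the paper does. Where you genuinely diverge is the converse. The paper argues: if $s$ and $s\otimes t$ represent the same subobject then $S \simeq S\otimes T$; idempotence of $t$ makes $\id[S\otimes T]\otimes t$ invertible; Lemma~\ref{lem:splitmonosubunits} transfers invertibility along the isomorphism to $\id[S]\otimes t$; and Proposition~\ref{prop:subunits_less} then produces the factorisation $s \leq t$. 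You instead factor directly: naturality of $\lambda$ gives $\lambda_I \circ (s\otimes t) = t \circ \lambda_T \circ (s\otimes\id[T])$, so $s$, factoring through $\lambda_I\circ(s\otimes t)$, factors through $t$. Your route is more elementary and slightly more general --- it uses neither Lemma~\ref{lem:splitmonosubunits} nor idempotence of $t$ in that direction, so it shows $s\otimes t \simeq s$ implies $s \leq t$ for arbitrary subunits. What the paper's detour buys is the intermediate fact that $\id[S]\otimes t$ is an isomorphism, the invariant it works with throughout via Proposition~\ref{prop:subunits_less}, whereas your version is shorter as a standalone proof of the corollary. Finally, your explicit appeal to the dictionary between idempotent commutative monoids and meet-semilattices, and the check that the algebraic order coincides with the subobject order, makes precise a transport the paper leaves implicit; the routine verifications you cite (meet, top, and the order axioms) are all as you state.
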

\begin{proof}
  If $s$ and $s \otimes t$ represent the same subobject, then $S \simeq S \otimes T$, making $\id[S] \otimes t$ an isomorphism and so $s \leq t$ by Proposition~\ref{prop:subunits_less}. 
  Conversely, if $s \leq t$ then by the same proposition $\id[S] \otimes t$ is an isomorphism with $s = \lambda_I \circ (s \otimes t) \circ (\id[S] \otimes t)^{-1} \otimes \rho_S^{-1}$, and so both subobjects are equal. 
\end{proof}

\begin{example} \label{ex:idemcmonascat}
  Any meet-semilattice $(L, \wedge, 1)$ forms a strict symmetric monoidal category: objects are $x \in L$, there is a unique morphism $x \to y$ if $x \leq y$, tensor product is given by meet, and tensor unit is $I = 1$. Every morphism is monic so this monoidal category is firm, and its (idempotent) subunits are $(L, \wedge, 1)$.

  This gives the free firm symmetric monoidal category on a meet-semilattice. More precisely: this construction is a functor from the category of meet-semilattices and their homomorphisms to the category of firm braided monoidal categories with (strong) monoidal functors that preserve subunits; moreover it is left adjoint to the functor that takes idempotent subunits.
  
\end{example}

\section{Restriction and localisation}

If $U$ is an open subset of a locally compact Hausdorff space $X$, then any Hilbert $C_0(X)$-module induces a Hilbert $C_0(U)$-module. This section shows that this restriction behaves well in any monoidal category.

\begin{definition}
	Let $s$ be an idempotent subunit in a monoidal category $\cat{C}$. Define the \emph{restriction} of $\cat{C}$ to $s$, denoted by $\cat{C}\restrict{s}$, to be the full subcategory of $\cat{C}$ of objects $E$ for which $\id[E] \otimes s$ is an isomorphism.
\end{definition}

\begin{proposition}\label{prop:restrictioncoreflective}
	If $s$ is an idempotent subunit in a monoidal category $\cat{C}$, then $\cat{C}\restrict{s}$ is a coreflective monoidal subcategory of $\cat{C}$.
  \[\begin{pic}
    \node (l) at (0,0) {$\cat{C}$};
    \node (r) at (3,0) {$\cat{C}\restrict{s}$};
    \node at (1.5,0) {$\top$};
    \draw[->] ([yshift=1mm]l.east) to[out=10,in=170] ([yshift=1mm]r.west);
    \draw[left hook->] ([yshift=-1mm]r.west) to[out=-170,in=-10] ([yshift=-1mm]l.east);
  \end{pic}\]
  The right adjoint $\cat{C} \to \cat{C}\restrict{s}$, given by $E \mapsto E \otimes S$ and $f \mapsto f \otimes \id[S]$, is also called \emph{restriction} to $s$.
\end{proposition}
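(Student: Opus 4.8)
The plan is to realise the claimed right adjoint as $R=({-})\otimes S$ with counit
\[
  \varepsilon_E = \rho_E\circ(\id[E]\otimes s)\colon E\otimes S\to E,
\]
and to reduce everything to a single non-formal ingredient: that $\id[S]\otimes s$, and not merely $s\otimes\id[S]$, is invertible.

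First I would prove exactly this. Factoring $s\otimes s\colon S\otimes S\to I\otimes I$ through each variable and inserting the coherence identity $\lambda_I=\rho_I$ gives
\[
  s\circ\lambda_S\circ(s\otimes\id[S]) = \lambda_I\circ(s\otimes s) = \rho_I\circ(s\otimes s) = s\circ\rho_S\circ(\id[S]\otimes s),
\]
so, $s$ being monic, $\mu:=\lambda_S\circ(s\otimes\id[S])=\rho_S\circ(\id[S]\otimes s)$. Idempotence makes $s\otimes\id[S]$ invertible, hence $\mu$ is an isomorphism, hence so is $\id[S]\otimes s=\rho_S^{-1}\circ\mu$. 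By naturality of the associator this upgrades to closure: if $\id[E]\otimes s$ and $\id[F]\otimes s$ are invertible then so is $\id[E\otimes F]\otimes s$, and in particular $S\in\cat{C}\restrict{s}$ and $E\otimes S\in\cat{C}\restrict{s}$ for every $E$. Thus $R$ is a well-defined functor $\cat{C}\to\cat{C}\restrict{s}$, $f\mapsto f\otimes\id[S]$, and $\cat{C}\restrict{s}$ is closed under $\otimes$ and contains $S$.

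Next I would assemble the adjunction $\iota\dashv R$. The counit $\varepsilon$ is natural by naturality of $\rho$, and for $A\in\cat{C}\restrict{s}$ it is invertible, so $\eta_A:=\varepsilon_A^{-1}$ defines a natural isomorphism $\id\Rightarrow R\iota$, the unit. One triangle identity, $\varepsilon_A\circ\eta_A=\id[A]$, is immediate. The other demands $(\varepsilon_E\otimes\id[S])\circ\varepsilon_{E\otimes S}^{-1}=\id[E\otimes S]$, that is $\varepsilon_E\otimes\id[S]=\varepsilon_{E\otimes S}$; using Mac\,Lane's triangle axiom and naturality of the associator, both sides rewrite as $(\id[E]\otimes\mu)\circ\alpha_{E,S,S}$, the two presentations $\lambda_S\circ(s\otimes\id[S])$ and $\rho_S\circ(\id[S]\otimes s)$ of $\mu$ being identified precisely by the lemma above. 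Since $\eta$ is a natural isomorphism, this exhibits $\cat{C}\restrict{s}$ as a full coreflective subcategory with coreflector $R$. Equivalently one checks directly that $\varepsilon_E$ is terminal among maps into $E$ from $\cat{C}\restrict{s}$: any $f\colon A\to E$ factors as $f=\varepsilon_E\circ\big((f\otimes\id[S])\circ\eta_A\big)$ by naturality of $\rho$, with uniqueness from naturality of $\varepsilon$ and invertibility of $\eta_A$.

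I expect the only genuinely non-routine step to be the opening lemma that $\id[S]\otimes s$ is invertible; it is exactly this fact that makes $R$ land in $\cat{C}\restrict{s}$ and that closes the second triangle identity. The remaining work — checking that the inherited associators and unitors make $\cat{C}\restrict{s}$ monoidal with unit $S$ (the right unit isomorphism being $\varepsilon$) and that the coreflector $R$ is monoidal — is then coherence bookkeeping; the one delicate point, supplying the left unit isomorphism $S\otimes E\to E$, is immediate in the symmetric examples of interest, where $\id[E]\otimes s$ invertible is equivalent to $s\otimes\id[E]$ invertible.
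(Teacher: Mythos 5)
Your proposal is correct, and its core is the same adjunction as the paper's: the coreflector $({-})\otimes S$ with counit $\varepsilon_E = \rho_E\circ(\id[E]\otimes s)$ is exactly what the paper exhibits, except that the paper packages it as the natural bijection $\cat{C}(E,F)\simeq\cat{C}\restrict{s}(E,F\otimes S)$, whose two directions $f\mapsto(f\otimes\id[S])\circ(\id[E]\otimes s)^{-1}\circ\rho_E^{-1}$ and $g\mapsto\rho_F\circ(\id[F]\otimes s)\circ g$ are precisely your transposes along $\eta$ and $\varepsilon$; verifying the triangle identities, as you do, is the same computation in unit--counit clothing, and your identity $\varepsilon_E\otimes\id[S]=\varepsilon_{E\otimes S}$ (via Mac Lane's triangle axiom and the two presentations of $\mu$) is what makes the paper's bijection well defined and mutually inverse. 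There are two points of genuine divergence worth recording. First, your opening lemma --- that idempotence, i.e.\ invertibility of $s\otimes\id[S]$, forces invertibility of $\id[S]\otimes s$, via $\lambda_I=\rho_I$, naturality of the unitors, and monicity of $s$ --- is used only tacitly by the paper: its justification that $E\otimes S$ lies in $\cat{C}\restrict{s}$ writes $\id[E\otimes S]\otimes s=\alpha^{-1}\circ(\id[E]\otimes(\id[S]\otimes s))\circ\alpha$ and appeals to idempotence, which by Definition~\ref{def:idempotent} literally gives only the other-sided isomorphism; your lemma (in the style of Boyarchenko--Drinfeld, whom the paper credits globally) is exactly the missing bridge, so making it explicit strengthens the argument rather than departing from it. Second, for monoidality the paper defers entirely to a citation (Jacobs--Mandemaker, Theorem~5), whereas you do part of the bookkeeping by hand and honestly flag the one real obstruction: without a braiding, invertibility of $\id[E]\otimes s$ does not obviously yield invertibility of $s\otimes\id[E]$, so the left unitor $S\otimes E\to E$ for the would-be unit $S$ is not free. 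That is a genuine subtlety of the left/right asymmetry in a plain monoidal category, and resolving it in the braided/symmetric setting, as you do, matches how the rest of the paper (and its sequel) actually operates; your treatment of this point is no less complete than the paper's outsourced one. Everything else --- naturality of $\varepsilon$, the unit $\eta_A=\varepsilon_A^{-1}$ on objects of $\cat{C}\restrict{s}$, closure of $\cat{C}\restrict{s}$ under $\otimes$, and the terminality characterisation of $\varepsilon_E$ with uniqueness extracted from $\varepsilon_{E\otimes S}=\varepsilon_E\otimes\id[S]$ --- checks out.
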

\begin{proof}
	First, if $E \in \cat{C}$, note that $E \otimes S$ is indeed in $\cat{C}\restrict{s}$ because $\id[E \otimes S] \otimes s = \alpha^{-1} \circ (\id[E] \otimes (\id[S] \otimes s)) \circ \alpha$ and $s$ is idempotent. 
  Similarly, $\cat{C}\restrict{s}$ is a monoidal subcategory of $\cat{C}$. 
  Finally, there is a natural bijection 
	\begin{align*}
		\cat{C}(E,F) &\simeq  \cat{C}\restrict{s}(E,F\otimes S)\\
		f &\mapsto (f\otimes\id[s]) \circ (\id[E]\otimes s)^{-1} \circ \rho_E^{-1}\\
		\rho_F\circ (\id[F]\otimes s) \circ g &\mapsfrom g
	\end{align*}
  for $E \in \cat{C}\restrict{s}$ and $F \in \cat{C}$.
  So restriction is right adjoint to inclusion.
  For monoidality, see~\cite[Thm~5]{jacobsmandemaker:coreflections}.
\end{proof}

\begin{example}
  Restricting $\cat{Hilb}_{C_0(X)}$ to the idempotent subunit~\eqref{eq:subunithilbertmodules} induced by an open subset $U \subseteq X$ does nearly, but not quite, give $\cat{Hilb}_{C_0(U)}$: one gets the full subcategory of Hilbert $C_0(X)$-modules $E$ for which $\|x\|(X \setminus U)=0$ for all $x \in E$. Any such Hilbert $C_0(X)$-module also forms a $C_0(U)$-module. But conversely there is no obvious way to extend the action of scalars on a general $C_0(U)$-module to make it a $C_0(X)$-module. 
  There is a so-called \emph{local} adjunction between $\cat{Hilb}_{C_0(X)}\restrict{C_0(U)}$ and $\cat{Hilb}_{C_0(U)}$, which is only an adjunction when $U$ is clopen~\cite[Proposition~4.3]{clarecrisphigson:adjoint}.
  This problem does not occur in a purely algebraic setting, see Appendix~\ref{sec:firmrings}.
\end{example}

\begin{proof}
  We first prove that $E \in \cat{C}\restrict{S}$ if and only if $|x|(X\setminus U)=0$ for all $x \in E$, where $|x|^2 = \langle x, x \rangle_{C_0(X)}$.
  On the one hand, if $x \in E$ and $f \in S$ then $|x \otimes f|(X \setminus U) = |x| |f| (X \setminus U)=0$. 
  Therefore $|x|(X \setminus U)=0$ for all $x \in E \otimes S$.
  Because $E \otimes S \simeq E$ is an isomorphism, $|x|(X \setminus U)=0$ for all $x \in E$.

  On the other hand, suppose that $|x|(X \setminus U)=0$ for all $x \in E$.
  We are to show that the morphism $E \otimes S \to E$ given by $x \otimes f \mapsto xf$ is bijective.
  To see injectivity, let $f \in S$ and $x \in E$, and suppose that $xf=0$. Then $|x||f|=|xf|=0$, so for all $t \in U$ either $|x|(t)=0$ or $f(t)=0$. So $|x \otimes f|(U)=0$, and hence $x \otimes f=0$.
  To see surjectivity, let $x \in E$. Then $|x|(t)=0$ for all $t \in X \setminus U$. So $x=\lim xf_n$ for an approximate unit $f_n$ of $S$. But that means $x$ is the image of $\lim x \otimes f_n$.
\end{proof}

Localisation in algebra generally refers to a process that adds formal inverses to an algebraic structure~\cite[Chapter 7]{kashiwara2005categories}. We will show that restriction is an example of localisation in this sense.

\begin{definition}
	Let $\cat{C}$ be a category and $\Sigma$ a collection of morphisms in $\cat{C}$. 
  A \emph{localisation of $\cat{C}$ at $\Sigma$} is a category $\cat{C}[\Sigma^{-1}]$ together with a functor $Q \colon \cat{C} \to \cat{C}[\Sigma^{-1}]$ such that:
	\begin{itemize}
		\item $Q(f)$ is an isomorphism for every $f\in \Sigma$;
		\item for any functor $R\colon\cat{C} \rightarrow \cat{D}$ such that $R(f)$ is an isomorphism for all $f\in\Sigma$, there exists a functor $\overline{R}\colon\cat{C}[\Sigma^{-1}] \rightarrow \cat{D}$ and a natural isomorphism $\overline{R}\circ Q \simeq R$;
    \[\begin{tikzpicture}
      \node (l) at (0,1.3) {$\cat{C}$};
      \node (tr) at (3,1.3) {$\cat{C}[\Sigma^{-1}]$};
      \node (br) at (3,0) {$\cat{D}$};
      \draw[->] (l) to node[above]{$Q$} (tr);
      \draw[->] (l) to node[below]{$R$} (br);
      \draw[->,dashed] (tr) to (br);
      \node at (2,.9) {$\simeq$};
    \end{tikzpicture}\]
		\item precomposition  
		$
			(-)\circ Q \colon \big[\cat{C}[\Sigma^{-1}], \cat{D}\big] \to [\cat{C}, \cat{D}]
		$
		is full and faithful for every category $\cat{D}$.
	\end{itemize}
\end{definition}

\begin{proposition}
	Restriction $\cat{C} \to \cat{C}\restrict{s}$ at an idempotent subunit $s$
  is a localisation of $\cat{C}$ at $\{ \id[E]\otimes s \mid E\in\cat{C} \}$.
\end{proposition}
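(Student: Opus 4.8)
The plan is to take $\cat{C}[\Sigma^{-1}]$ to be $\cat{C}\restrict{s}$ and $Q$ to be the restriction functor $E \mapsto E \otimes S$, and to verify the three defining clauses by leaning on the adjunction already established in Proposition~\ref{prop:restrictioncoreflective}. Write $J \colon \cat{C}\restrict{s} \hookrightarrow \cat{C}$ for the inclusion, so that $J \dashv Q$. Reading the bijection in that proposition at $g = \id[E \otimes S]$, the counit of this adjunction is $\epsilon_E = \rho_E \circ (\id[E] \otimes s) \colon E \otimes S \to E$. Since each $\rho_E$ is invertible, the localising class $\Sigma = \{\id[E] \otimes s \mid E \in \cat{C}\}$ coincides, up to unitors, with the class of counit components. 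The structural fact driving everything is that $J$ is fully faithful, so the unit $\eta \colon \id[\cat{C}\restrict{s}] \Rightarrow QJ$ is a natural isomorphism; this is exactly the coreflection of Proposition~\ref{prop:restrictioncoreflective}.

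For the first clause I would show $Q(\id[E] \otimes s)$ is an isomorphism. Directly, $Q(\id[E] \otimes s) = (\id[E] \otimes s) \otimes \id[S]$, which coherence rewrites as $\id[E] \otimes (s \otimes \id[S])$, an isomorphism because $s$ is idempotent. (Equivalently, the triangle identity $Q\epsilon \circ \eta Q = \id[Q]$ forces $Q(\epsilon_E) = \eta_{QE}^{-1}$, which is invertible; composing with the invertible $Q(\rho_E)$ gives the same conclusion.) For the second clause, given any $R \colon \cat{C} \to \cat{D}$ inverting every morphism in $\Sigma$, I would set $\overline{R} = R \circ J$. Then $\overline{R} \circ Q = R \circ (JQ)$, and whiskering the counit yields $R\epsilon \colon R \circ (JQ) \Rightarrow R$ with components $R(\rho_E) \circ R(\id[E] \otimes s)$; both factors are isomorphisms, the first because $R$ preserves isomorphisms and the second by hypothesis, so $R\epsilon$ is the required natural isomorphism $\overline{R} \circ Q \simeq R$.

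The third clause, full faithfulness of precomposition $(-) \circ Q \colon [\cat{C}\restrict{s}, \cat{D}] \to [\cat{C}, \cat{D}]$, is where I expect the main work to lie. The idea is to pass to functor categories: the adjunction $J \dashv Q$ induces by whiskering an adjunction $(-)\circ Q \dashv (-) \circ J$ between $[\cat{C}\restrict{s}, \cat{D}]$ and $[\cat{C}, \cat{D}]$, whose unit and counit are obtained from $\eta$ and $\epsilon$. Its left adjoint is precisely $(-) \circ Q$, and a left adjoint is fully faithful exactly when its unit is a natural isomorphism. That unit is given by whiskering with $\eta$: at $G \in [\cat{C}\restrict{s}, \cat{D}]$ its component $G\eta \colon G \Rightarrow G \circ (QJ)$ has components $G(\eta_E)$. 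As $\eta$ is a natural isomorphism, so is each $G(\eta_E)$, and therefore $(-) \circ Q$ is full and faithful. Having verified all three clauses, restriction is a localisation of $\cat{C}$ at $\Sigma$. The only genuinely delicate point is this last reduction of the functor-category condition to invertibility of $\eta$, which is in turn furnished for free by the coreflection.
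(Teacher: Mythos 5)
Your proposal is correct and takes essentially the same route as the paper: you choose the same $\overline{R} = R \circ J$ and exhibit the same natural isomorphism with components $R(\rho_E) \circ R(\id[E] \otimes s) \colon \overline{R}(Q(E)) \to R(E)$, while your first clause is exactly the paper's observation that idempotence makes $(-) \otimes S$ invert $\Sigma$. The only difference is that where the paper dismisses full faithfulness of $(-)\circ Q$ as ``easy to check,'' you supply a correct abstract justification via the whiskered adjunction $(-)\circ Q \dashv (-)\circ J$, whose unit $G\eta$ is invertible because the coreflection of Proposition~\ref{prop:restrictioncoreflective} makes $\eta$ a natural isomorphism.
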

\begin{proof}
  Observe that $(-)\otimes S$ sends elements of $\Sigma$ to isomorphisms because $s$ is idempotent. 
	Let $R\colon\cat{C}\rightarrow\cat{D}$ be any functor making $R(\id[E] \otimes s)$ an isomorphism for all $E\in\cat{C}$. 
  Define $\overline{R}\colon \cat{C}\restrict{s} \to \cat{D}$ by $E \mapsto R(E)$ and $f \mapsto R(f)$.
  Then 
	\begin{equation*}
		\eta_E = R(\rho_E)\circ R(\id[E]\otimes s) \colon R(E\otimes S) \rightarrow R(E)
	\end{equation*}
  is a natural isomorphism.
	It is easy to check that precomposition with restriction is full and faithful.
\end{proof}

Above we restricted along one individual idempotent subunit $s$. Next we investigate the structure of the family of these functors when $s$ varies, which we simply call \emph{restriction}.

\begin{definition} \label{def:graded_monad} \cite{fujiikatsumatamellies:gradedmonads}
  Let $\cat{C}$ be a category and $(\cat{E}, \otimes, 1)$ a monoidal category. Denote by $[\cat{C}, \cat{C}]$ the monoidal category of endofunctors of $\cat{C}$ with $F\otimes G = G\circ F$.  An \emph{$\cat{E}$-graded monad} on $\cat{C}$ is a monoidal functor $T\colon \cat{E} \rightarrow [\cat{C}, \cat{C}]$. More concretely, an $\cat{E}$-graded monad consists of:
  \begin{itemize}
    \item a functor $T\colon \cat{E} \rightarrow [\cat{C}, \cat{C}]$;
    \item a natural transformation $\eta \colon \id[\cat{C}] \Rightarrow T(1)$;
    \item a natural transformation $\mu_{s,t}\colon T(t)\circ T(s) \rightarrow T(s\otimes t)$ for every two objects $s,t$ in $\cat{E}$;
  \end{itemize}
  making the following diagrams commute for all $r,s,t$ in $\cat{E}$.
  \[\begin{pic}[xscale=2.5, yscale=1.5]
  \node (TsTtTu) at (1,2) {$T(t)\circ T(s)\circ T(r)$};
  \node (TstTu) at (0,1) {$T(t)\circ T(r\otimes s)$};
  \node (Tstu1) at (0,0) {$T((r\otimes s)\otimes t)$};
  \node (Tstu2) at (2,0) {$T(r\otimes (s\otimes t))$};
  \node (TsTtu) at (2,1) {$T(t\otimes s)\circ T(r)$};
  \draw[->] (TsTtTu) to node[left]{$\mu_{r,s}\otimes \id[T(t)]$} (TstTu);
  \draw[->] (TstTu) to node[left]{$\mu_{r\otimes s,t}$} (Tstu1);
  \draw[->] (Tstu1) to node[below]{$T(\alpha_{r,s,t})$} (Tstu2);
  \draw[->] (TsTtu) to node[right]{$\mu_{r,s\otimes t}$} (Tstu2);
  \draw[->] (TsTtTu) to node[right]{$\id[T(r)]\otimes\mu_{s,t}$} (TsTtu); 
  \end{pic}
  \qquad
  \begin{pic}[xscale=2.3,yscale=.8]
  \node (idc Ts) at (0,1.5) {$T(s)\circ \id[\cat{C}]$};
  \node (Ts) at (0,0) {$T(s)$};
  \node (T1s) at (1.5,0) {$T(1\otimes s)$};
  \node (T1Ts) at (1.5,1.5) {$T(s)\circ T(1)$};
  \draw[-, double] (idc Ts) to node[left]{} (Ts);
  \draw[->] (idc Ts) to node[below]{$\eta\otimes \id[T(s)]$} (T1Ts);
  \draw[->] (T1Ts) to node[right]{$\mu_{1,s}$} (T1s);
  \draw[->] (T1s) to node[below]{$T(\lambda_s)$} (Ts);
  \begin{scope}[yshift=23mm]
  \node (Ts idc) at (0,1.5) {$\id[\cat{C}]\circ T(s)$};
  \node (Ts) at (0,0) {$T(s)$};
  \node (Ts1) at (1.5,0) {$T(s\otimes 1)$};
  \node (TsT1) at (1.5,1.5) {$T(1)\circ T(s)$};
  \draw[-, double] (Ts idc) to node[left]{} (Ts);
  \draw[->] (Ts idc) to node[above]{$\id[T(s)]\otimes \eta$} (TsT1);
  \draw[->] (TsT1) to node[right]{$\mu_{s,1}$} (Ts1);
  \draw[->] (Ts1) to node[above]{$T(\rho_s)$} (Ts);
  \end{scope}
  \end{pic}\]
\end{definition}

\begin{theorem}
  Let $\cat{C}$ be a monoidal category.
  Restriction is a monad graded over the idempotent subunits, where we do not identify monomorphisms representing the same subobject: $\cat{E}$ has as objects monomorphisms $s \colon S \rightarrowtail I$ in $\cat{C}$ with $s \otimes \id[S]$ an isomorphism, and as morphisms $f \colon s \to t$ those $f$ with $s = t \circ f$.
\end{theorem}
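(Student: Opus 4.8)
The plan is to exhibit the graded monad structure explicitly and verify the coherence conditions of Definition~\ref{def:graded_monad}, drawing on the adjunction and naturality already established in Proposition~\ref{prop:restrictioncoreflective}. First I would define the functor $T \colon \cat{E} \to [\cat{C},\cat{C}]$ on objects by sending an idempotent subunit $s \colon S \rightarrowtail I$ to the endofunctor $(-) \otimes S$, which is the restriction functor from Proposition~\ref{prop:restrictioncoreflective}. On morphisms, a map $f \colon s \to t$ in $\cat{E}$ (so $s = t \circ f$ with $f \colon S \to T$) is sent to the natural transformation $(-) \otimes f \colon (-) \otimes S \Rightarrow (-) \otimes T$. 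Functoriality of $T$ is immediate from bifunctoriality of $\otimes$, and Corollary~\ref{cor:subunitdomain} guarantees that between any two idempotent subunits there is at most the relevant structure, keeping $\cat{E}$ well-behaved.

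Next I would supply the monad data. The unit $\eta \colon \id[\cat{C}] \Rightarrow T(\id[I])$ has components $\rho_E^{-1} \colon E \to E \otimes I$, using that $\id[I]$ is the monoidal unit of $\cat{E}$ and $T(\id[I]) = (-) \otimes I$. For the multiplication, recall that in $[\cat{C},\cat{C}]$ the monoidal product is $F \otimes G = G \circ F$, so I need $\mu_{s,t} \colon T(t) \circ T(s) \Rightarrow T(s \otimes t)$, i.e.\ a natural transformation $((-) \otimes S) \otimes T \Rightarrow (-) \otimes (S \otimes T)$. This is precisely the associator $\alpha$, whose components $\alpha_{E,S,T} \colon (E \otimes S) \otimes T \to E \otimes (S \otimes T)$ are isomorphisms and natural in $E$. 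Thus both $\eta$ and $\mu$ are built entirely from the coherence isomorphisms of $\cat{C}$, which is the conceptual heart of why restriction forms a graded monad.

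**Verifying the coherence diagrams** is then where the real work lies. The associativity pentagon of Definition~\ref{def:graded_monad} unwinds, after substituting $\mu = \alpha$ and $T(\alpha_{r,s,t}) = \id{} \otimes \alpha_{R,S,T}$, into exactly the Mac Lane pentagon for the associator of $\cat{C}$, so it commutes by coherence. Likewise the two unit triangles reduce to the triangle identities relating $\alpha$, $\lambda$, and $\rho$, again holding by coherence. The one genuinely delicate point is bookkeeping: the objects $r,s,t$ of $\cat{E}$ carry the monoid structure from the Proposition on idempotent subunits, where $s \otimes t = \lambda_I \circ (s \otimes t)$, so I must check that $T$ respects this monoidal structure, meaning that $T(s \otimes t) = (-) \otimes (S \otimes T)$ agrees functorially with the composite. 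I expect this coherence-chasing to be the main obstacle, not because it is hard but because the three layers of $\otimes$ --- in $\cat{C}$, in $\cat{E}$, and in $[\cat{C},\cat{C}]$ --- must be kept carefully distinct; once the identifications $\mu = \alpha$ and $\eta = \rho^{-1}$ are made, every diagram collapses to a standard coherence law and no computation beyond invoking Mac Lane's theorem remains.
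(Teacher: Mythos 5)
Your proposal is correct and coincides with the paper's own proof: the paper likewise sends $s$ to $(-)\otimes S$ and $f$ to $\id[(-)]\otimes f$, takes $\eta_E = \rho_E^{-1}$ and $\mu_{s,t} = \alpha_{(-),S,T}$, and then asserts that the associativity and unitality diagrams follow. You merely spell out what the paper leaves implicit, namely that these diagrams collapse to Mac Lane's coherence laws for $\alpha$, $\lambda$, $\rho$.
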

\begin{proof}
  The functor $\cat{E} \to [\cat{C},\cat{C}]$ sends $s \colon S \rightarrowtail I$ to $(-) \otimes S$, and $f$ to the natural transformation $\id[(-)] \otimes f$.
  The natural transformation $\eta_E \colon E \to E \otimes I$ is given by $\rho_E^{-1}$.
  The family of natural transformations $\mu_{s,t} \colon ((-)\otimes S)\otimes T \rightarrow (-)\otimes(S\otimes T)$ is given by $\alpha_{(-),S,T}$. Associativity and unitality diagrams follow.
\end{proof}

\section{Support}

So far we have focused on the spatial structure encoded within the tensor unit. This section investigates how this spatial structure influences (morphisms between) arbitrary objects in a monoidal category.

\begin{definition} \label{def:supportin}
  A morphism $f \colon E \to F$ has \emph{support in} a morphism $s \colon S \to I$ when it factors through the morphism $\rho_F \circ (\id[F] \otimes s)$. We will particularly be interested in the case when $s$ is a subunit.

\end{definition}

\begin{proposition} \label{prop:local}
  The following are equivalent for an object $E$ and idempotent subunit $s \colon S \rightarrowtail I$:
  \begin{enumerate}[label=(\alph*)]
    \item \label{cond:local_canon}
      $\id[E] \otimes s \colon E \otimes S \to E \otimes I$ is an isomorphism;
    \item \label{cond:local_some}
      there is an isomorphism $E \otimes S \simeq E$; 
    \item \label{cond:local_someob}
      there is an isomorphism $E \simeq F \otimes S$ for some object $F$;
    \item \label{cond:supported}
      $\id[E]\colon E \to E$ has support in $s$.
  \end{enumerate}
\end{proposition}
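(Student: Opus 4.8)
The plan is to establish the cycle \ref{cond:local_canon}$\Rightarrow$\ref{cond:local_some}$\Rightarrow$\ref{cond:local_someob}$\Rightarrow$\ref{cond:local_canon} and then close the loop separately with \ref{cond:supported}, by proving \ref{cond:local_canon}$\Leftrightarrow$\ref{cond:supported}. Two of the first three links are immediate from the unitors: for \ref{cond:local_canon}$\Rightarrow$\ref{cond:local_some} one postcomposes the isomorphism $\id[E]\otimes s$ with $\rho_E \colon E\otimes I\to E$ to obtain an isomorphism $E\otimes S\simeq E$; and \ref{cond:local_some}$\Rightarrow$\ref{cond:local_someob} is trivial, taking $F=E$.

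The first implication requiring genuine work is \ref{cond:local_someob}$\Rightarrow$\ref{cond:local_canon}. Given an isomorphism $\phi\colon E\to F\otimes S$, I would exploit bifunctoriality of $\otimes$ to write the interchange identity $(\phi\otimes\id[I])\circ(\id[E]\otimes s) = (\id[F\otimes S]\otimes s)\circ(\phi\otimes\id[S])$, which exhibits $\id[E]\otimes s$ as conjugate to $\id[F\otimes S]\otimes s$ by the isomorphisms $\phi\otimes\id[S]$ and $\phi\otimes\id[I]$. Since $\id[F\otimes S]\otimes s = \alpha^{-1}\circ(\id[F]\otimes(\id[S]\otimes s))\circ\alpha$ and idempotence of $s$ makes $\id[S]\otimes s$ an isomorphism (by Proposition~\ref{prop:subunits_less}, applied to $s$ factoring through itself), the bottom map is an isomorphism, hence so is $\id[E]\otimes s$.

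For the support condition, \ref{cond:local_canon}$\Rightarrow$\ref{cond:supported} is quick: if $\id[E]\otimes s$ is an isomorphism then so is $m := \rho_E\circ(\id[E]\otimes s)\colon E\otimes S\to E$, and $\id[E]$ factors through $m$ via $m^{-1}$, which is exactly having support in $s$. I expect the main obstacle to be the reverse, \ref{cond:supported}$\Rightarrow$\ref{cond:local_canon}. Here support of the identity provides a morphism $g\colon E\to E\otimes S$ with $m\circ g=\id[E]$, exhibiting $E$ as a retract of $E\otimes S$. The key idea is to feed this retraction into Lemma~\ref{lem:splitmonosubunits}, taking $g$ as the split monomorphism and $m$ as its retraction: because $E\otimes S$ lies in $\cat{C}\restrict{s}$, that is $\id[E\otimes S]\otimes s$ is an isomorphism by idempotence, the lemma transfers invertibility back along the splitting to give that $\id[E]\otimes s$ is an isomorphism. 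This simultaneously closes the cycle and completes the equivalence with \ref{cond:supported}.

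The one subtlety to get right is the correct matching of variables when invoking Lemma~\ref{lem:splitmonosubunits} (the lemma's section morphism is our $g\colon E\to E\otimes S$ and its retraction is our $m$), together with the observation that $\id[E\otimes S]\otimes s$ is an isomorphism — the same computation already used in the proof of Proposition~\ref{prop:restrictioncoreflective}. Everything else reduces to coherence and bifunctoriality, so no further difficulty is anticipated.
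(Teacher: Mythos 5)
Your proposal is correct and takes essentially the same approach as the paper: both rest on idempotence making $\id[F \otimes S] \otimes s$ invertible and on Lemma~\ref{lem:splitmonosubunits} to transfer invertibility along a splitting, and your \ref{cond:supported}$\implies$\ref{cond:local_canon} step is exactly the paper's. The only difference is cosmetic routing: the paper closes the cycle as \ref{cond:local_someob}$\implies$\ref{cond:supported}$\implies$\ref{cond:local_canon} (invoking the lemma for \ref{cond:local_someob}$\implies$\ref{cond:supported}, an isomorphism being a split mono), whereas you prove \ref{cond:local_someob}$\implies$\ref{cond:local_canon} directly by conjugating with $\phi \otimes \id[S]$ and $\phi \otimes \id[I]$ --- the isomorphism special case of the same lemma --- and then treat \ref{cond:supported} separately.
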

\begin{proof}
  Trivially \ref{cond:local_canon} $\implies$ \ref{cond:local_some} $\implies$ \ref{cond:local_someob}. For \ref{cond:local_someob}$\implies$\ref{cond:supported}:
  by idempotence $\id[F \otimes S] \otimes s$ is an isomorphism, 
  so if $E \simeq F \otimes S$ then also $\id[E]$ is an isomorphism by Lemma~\ref{lem:splitmonosubunits}.
  For \ref{cond:supported}$\implies$\ref{cond:local_canon}: if $\id[E]$ factors through $\id[E] \otimes s$, then by idempotence $\id[E \otimes S] \otimes s$ is an isomorphism, and hence so is $\id[E] \otimes s$ by Lemma~\ref{lem:splitmonosubunits}.
\end{proof}

Now comes the main result: a simple observation, but the basis for the application in section~\ref{sec:teleportation}.

\begin{lemma} \label{lem:supportin}
  Let $s \colon S \to I$ and $t \colon T \to I$ be morphisms in a firm monoidal category.
  \begin{enumerate}[label=(\alph*)]
    \item If $f \colon E \to F$ has support in $s$, and $g \colon F \to G$ has support in $t$, then $g \circ f$ has support in $t \otimes s$.
    \item If $f$ has support in $s$, and $g$ has support in $t$, then $g \otimes f$ has support in $t \otimes s$.
  \end{enumerate}
\end{lemma}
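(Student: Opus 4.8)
The plan is to reformulate ``support in $s$'' in terms of a single natural transformation and then reduce everything to naturality and coherence. Write $\phi^F_s = \rho_F \circ (\id[F] \otimes s) \colon F \otimes S \to F$, so that $f \colon E \to F$ has support in $s$ exactly when $f = \phi^F_s \circ \tilde f$ for some $\tilde f \colon E \to F \otimes S$. The first thing I would record is that $\phi^{(-)}_s$ is natural in its object argument: for any $h \colon F \to F'$ one has $h \circ \phi^F_s = \phi^{F'}_s \circ (h \otimes \id[S])$, which follows from naturality of $\rho$ together with the interchange law. I would also record a ``merge'' identity $\phi^G_t \circ \phi^{G \otimes T}_s = \phi^G_{t \otimes s} \circ \alpha_{G,T,S}$, where $t \otimes s = \lambda_I \circ (t \otimes s) \colon T \otimes S \to I$; this is a routine coherence calculation, since both sides cap off the $T$-wire with $t$ and the $S$-wire with $s$, the only difference being the associator reassociating $(G \otimes T) \otimes S$ into $G \otimes (T \otimes S)$.

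For part (a), start from witnesses $f = \phi^F_s \circ \tilde f$ and $g = \phi^G_t \circ \tilde g$ with $\tilde g \colon F \to G \otimes T$. Then
\[
  g \circ f = \phi^G_t \circ \tilde g \circ \phi^F_s \circ \tilde f = \phi^G_t \circ \phi^{G \otimes T}_s \circ (\tilde g \otimes \id[S]) \circ \tilde f,
\]
where the second equality is naturality of $\phi_s$ applied to $\tilde g$. The merge identity rewrites this as $\phi^G_{t \otimes s} \circ \big(\alpha_{G,T,S} \circ (\tilde g \otimes \id[S]) \circ \tilde f\big)$, which exhibits $g \circ f$ as having support in $t \otimes s$, with the order of $t \otimes s$ coming out correctly from the order of composition.

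For part (b), my plan is to reduce to (a) by factoring $g \otimes f = (g \otimes \id[F]) \circ (\id[E'] \otimes f)$ (writing $g \colon E' \to F'$) and checking that each whiskering preserves support. Left whiskering is harmless: if $f$ has support in $s$ then so does $\id[A] \otimes f$, since $\id[A] \otimes \phi^F_s = \phi^{A \otimes F}_s \circ \alpha^{-1}_{A,F,S}$ by coherence, and this costs only an associator. Right whiskering is where the real content sits: to see that $g \otimes \id[A]$ still has support in $t$ one must move the $T$-wire past $A$, so I would insert the symmetry $\sigma_{T,A}$ to identify $(F' \otimes T) \otimes A$ with $(F' \otimes A) \otimes T$ before recognising $\phi^{F' \otimes A}_t$. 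With $\id[E'] \otimes f$ supported in $s$ and $g \otimes \id[F]$ supported in $t$, part (a) immediately gives that $g \otimes f$ has support in $t \otimes s$.

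The genuinely load-bearing step is the right-whiskering claim in part (b): it is the only place that uses the braiding, whereas part (a) and left whiskering go through in any monoidal category. This suggests the result really belongs to the braided (indeed symmetric) setting of the paper's examples. Everything else is naturality of $\phi_s$ plus coherence bookkeeping for associators and unitors; none of it uses firmness, since the support witnesses are all constructed explicitly. I would discharge the coherence obligations by a string-diagram argument rather than by manipulating the structural isomorphisms by hand.
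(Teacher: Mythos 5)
Your proof is correct and supplies exactly the ``straightforward diagram chase'' that the paper's proof omits: factoring through $\rho_F \circ (\id[F] \otimes s)$, using naturality of that map together with coherence, and reducing (b) to (a) via the two whiskerings is the intended argument, and your merge identity and both whiskering identities check out. Your side observations are also accurate: firmness is never used, and the right-whiskering step in (b) genuinely requires the braiding, which the lemma's phrasing ``firm monoidal category'' elides but which matches the paper's standing braided assumptions.
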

\begin{proof}
  Straightforward diagram chase. 
\end{proof}

In particular, if $\id[E]$ or $\id[F]$ has support in an idempotent subunit $s$, then so does any map $E \to F$. Similarly, if $f$ or $g$ has support in $s$, so does $g \circ f$.
More generally, in a firm braided monoidal category, any morphism built from a finite number of maps $f_i$ with support in $s_i$ using $\otimes$ and $\circ$ has support in $\bigwedge s_i$.

\begin{remark}
  In general braided monoidal categories, we can only say that morphisms have support \emph{within} some idempotent subunit. In $\cat{Hilb}_{C_0(X)}$, the idempotent subunits form a \emph{complete} meet-semilattice: there is a greatest lower bound (and a least upper bound) of any family of idempotent subunits. In such categories we can speak of \emph{the} support of a morphism $f$ as the largest idempotent subunit $\supp(f)$ that it restricts to. Hence the previous lemma says $\supp(g \circ f) \leq \supp(g) \wedge \supp(f)$ and $\supp(f \otimes g) \leq \supp(f) \wedge \supp(g)$. In $\cat{Hilb}_{C_0(X)}$, supports moreover respect the tensor: $\supp(f) \otimes \supp(g) = \supp(f \otimes g)$.

  We leave open the question of what structure on the firm braided monoidal category $\cat{C}$ might ensure that the idempotent subunits form a distributive lattice, or even a locale, as in $\cat{Hilb}_{C_0(X)}$. One answer is the following. It is straightforward to see that if $\cat{C}$ has distributive coproducts and supports respecting the tensor in the above sense, the idempotent subunits form a lattice via $s \vee t = \supp([s,t])$.
\end{remark}

\section{Causal structure}

In Section~\ref{sec:hilbertmodules} we regarded $\cat{Hilb}_{C_0(X)}$ as a naive quantum field theory. 
What if $X$ has more structure than just a (topological) space? Can we model spacetime structure of $X$ using only categorical properties of $\cat{Hilb}_{C_0(X)}$? The first evidence is affirmative, as this section discusses.

\begin{remark}
  Causality relations may be treated formally when $X$ is a Lorentzian manifold with a time orientation, also known as a \emph{spacetime}~\cite{penrose1972techniques}. For points $s,t \in X$, write $s \chronprec t$ when there is a future-directed timelike curve from $s$ to $t$, or more generally $s \causprec t$  when there is a future-directed non-spacelike such curve. Both relations are  transitive, and if $s \chronprec t$ then $s \causprec t$. This defines four sets for each $t \in X$:
  \begin{center}\begin{tabular}{r|cc}
   & chronological & causal \\
   \hline
   future 
   & $I^+(t) = \{ s \in X \mid t \chronprec s\}$
   & $J^+(t) = \{ s \in X \mid t \causprec s\}$ \\
   past 
   & $I^-(t) = \{ s \in X \mid s \chronprec t \}$
   & $J^-(t) = \{ s \in X \mid s \causprec t \}$
  \end{tabular}\end{center}
  Conversely, the manifold structure of $X$ may be reconstructed from the relation $\causprec$ under the mild physical condition of global hyperbolicity using domain-theoretic techniques~\cite{martin2006domain}.
\end{remark}

Extend these definitions to subsets $S \subseteq X$ by $I^{\pm}(S) = \bigcup_{s \in S} I^{\pm}(s)$ and $J^{\pm}(S) = \bigcup_{s \in S} J^{\pm}(s)$. The subset $I^{+}(S)$ of $X$ is open, with $I^{+}(I^{+}(S)) = I^{+}(S) \subseteq J^{+}(S) = J^{+}(J^{+}(S)) \subseteq \closure{I^{+}(S)}$, and the same properties hold for $I^-, J^-$~\cite{penrose1972techniques}.
In fact, when $S$ is itself open the chronological and causal futures (and pasts) coincide. 

\begin{proposition} \label{prop:closurescoincide}
  If $S$ is an open subset of a spacetime $X$, then $S \subseteq I^\pm(S)$ and $I^\pm(S) = J^\pm(S)$.
\end{proposition}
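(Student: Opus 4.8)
The plan is to reduce everything to two geometric inputs about the causal structure of a spacetime, and then combine them by purely order-theoretic manipulation. Since reversing the time orientation of $X$ interchanges $I^+,J^+$ with $I^-,J^-$, it suffices to treat the future case: I would prove $S \subseteq I^+(S)$ and $I^+(S) = J^+(S)$, and then invoke this symmetry to obtain the statements for $I^-,J^-$.

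First I would establish $S \subseteq I^+(S)$. Fix $p \in S$. Because $X$ is a spacetime, there is a future-directed timelike curve $\gamma$ through $p$, say $\gamma(0) = p$; for each small $\varepsilon>0$ the point $\gamma(-\varepsilon)$ lies chronologically before $p$, that is $\gamma(-\varepsilon) \chronprec p$. Since $S$ is open and $\gamma$ is continuous, $\gamma(-\varepsilon) \in S$ once $\varepsilon$ is small enough. Hence $p \in I^+(\gamma(-\varepsilon)) \subseteq I^+(S)$, which gives the inclusion. Equivalently, one uses that in a spacetime every point lies in the closure of its own chronological past, so an open neighbourhood of $p$ necessarily meets $I^-(p)$.

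Next I would prove the two inclusions making up $I^+(S) = J^+(S)$. The inclusion $I^+(S) \subseteq J^+(S)$ is immediate from the pointwise implication $s \chronprec t \Rightarrow s \causprec t$ recorded in the preceding remark, which gives $I^+(s) \subseteq J^+(s)$ for every $s$. For the reverse inclusion $J^+(S) \subseteq I^+(S)$, take $p \in J^+(S)$, so that $s \causprec p$ for some $s \in S$. Applying the inclusion just proved to the point $s \in S$ yields $s' \in S$ with $s' \chronprec s$. I would then invoke the standard push-up property of causal structure—if $x \chronprec y$ and $y \causprec z$ then $x \chronprec z$—to conclude $s' \chronprec p$. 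Therefore $p \in I^+(s') \subseteq I^+(S)$, establishing $J^+(S) \subseteq I^+(S)$ and hence equality.

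The genuinely geometric content, and the step I expect to be the main obstacle to make fully rigorous, is the first one: that an open set contains a chronological predecessor of each of its points. This is exactly where the Lorentzian structure (time orientation and the local existence of timelike curves, e.g.\ inside a convex normal neighbourhood) is used, as opposed to the purely order-theoretic bookkeeping in the remainder. The push-up property is likewise a theorem of Lorentzian geometry, but it is entirely standard and I would simply cite it; once both geometric facts are in hand, the proposition follows formally.
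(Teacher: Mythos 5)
Your proof is correct and follows essentially the same route as the paper: first $S \subseteq I^+(S)$ via openness (your ``equivalent'' formulation---every point lies in the closure of its own chronological past, so $S$ meets $I^-(p)$---is literally the paper's argument, derived there from $t \in J^-(t) \subseteq \closure{I^-(t)}$), then $J^+(S) \subseteq I^+(S)$ by finding a chronological predecessor in $S$ and applying push-up, which the paper uses implicitly in the step ``Hence $r \chronprec t$.'' Your timelike-curve variant of the first step and your explicit citation of the push-up lemma are only cosmetic differences.
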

\begin{proof}
  We discuss $I^+$ and $J^+$, the other case is similar.
  For the first statement we need to show that for all $t \in S$ there is $s \in S$ with $s \chronprec t$. By definition of $\causprec$ always $t \in J^-(t)$ and so $t \in \closure{I^-(t)}$. Hence any open neighbourhood of $t$ has non-empty intersection with $I^-(t)$. In particular, there exists $s \in S \cap I^-(y)$.

  Next we show that $S \subseteq I^+(S)$ implies $I^+(S) = J^+(S)$.
  Let $t \in J^+(S)$, say $s \causprec t$ for $s \in S$. By the above there is $r \in S$ with $r \chronprec s$. Hence $r \chronprec t$, and so $t \in I^+(S)$. Conversely,  $I^+(S) \subseteq J^+(S)$ always holds.
\end{proof}

Thus we may view $I^+(-)$ and $I^-(-)$ as the `future' and `past' operators, providing a causal structure on the idempotent subunits of $\cat{Hilb}_{C_0(X)}$. 
Open subsets of $X$ closed under $I^+$ and $I^-$ are often called \emph{future sets} and \emph{past sets}~\cite[Section~3]{penrose1972techniques}. 

Let us generalise this to arbitrary monoidal categories.

\begin{definition}\label{def:closureoperator}
  A \emph{closure operator} on a partially ordered set $P$ is a function $C \colon P \to P$ satisfying:
  \begin{itemize}
    \item if $s \leq t$, then $C(s) \leq C(t)$;
    \item $s \leq C(s)$;
    \item $C(C(s)) \leq C(s)$.
  \end{itemize} 
  An element $s \in P$ is \emph{$C$-closed} when $s = C(s)$.
  A \emph{causal structure} on a monoidal category consists of a pair $(C^+, C^-)$ of closure operators on its partially ordered set of idempotent subunits.
\end{definition}



\begin{proposition}\label{prop:causalstructurerestricts}
  Causal structure restricts: if $r$ is an idempotent subunit in a firm braided monoidal category $\cat{C}$, and $C$ a closure operator, then $D(s) = C(s) \wedge r$ is a closure operator on $\cat{C}\restrict{r}$.
\end{proposition}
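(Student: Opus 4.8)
The plan is to first pin down the poset on which $D$ is supposed to act, and then verify the three closure-operator axioms, which become routine once this is done. Concretely, I claim that the idempotent subunits of the restriction $\cat{C}\restrict{r}$ are exactly the principal down-set $\mathord{\downarrow} r = \{ s \mid s \leq r \}$ inside the meet-semilattice of idempotent subunits of $\cat{C}$. Granting this, $D(s) = C(s) \wedge r$ is manifestly a self-map of $\mathord{\downarrow} r$: it is a meet of two idempotent subunits of $\cat{C}$, hence again one, and $C(s) \wedge r \leq r$ places it in the down-set.

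For the identification, the two things to observe are that the tensor unit of $\cat{C}\restrict{r}$ is the domain $R$ of $r$ (indeed $E \otimes R \simeq E$ for every $E \in \cat{C}\restrict{r}$, via $\rho_E \circ (\id[E] \otimes r)$, by definition of the restriction), and that a subobject $s' \colon S \rightarrowtail R$ of this unit corresponds, through postcomposition $s' \mapsto r \circ s'$, to an idempotent subunit $s \leq r$ of $\cat{C}$. In one direction, an idempotent subunit $s \leq r$ of $\cat{C}$ factors as $s = r \circ s'$ with $s'$ monic, and $S$ lies in $\cat{C}\restrict{r}$ because $s \leq r$ forces $\id[S] \otimes r$ to be an isomorphism by Proposition~\ref{prop:subunits_less}. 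Conversely, $r \circ s'$ is a subunit of $\cat{C}$ below $r$. Since $\cat{C}\restrict{r}$ is a full monoidal subcategory and $r$ is monic, this correspondence preserves the order (factorisations in $\cat{C}\restrict{r}$ and in $\cat{C}$ agree, and $r$ cancels on the left) and the meet (computed by the same functor $\otimes$ in both cases), so it is an isomorphism of meet-semilattices sending the top element $\id[R]$ to $r$.

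With $\mathord{\downarrow} r$ in hand, the three axioms are immediate order manipulations. \emph{Monotonicity}: $s \leq t$ gives $C(s) \leq C(t)$ by monotonicity of $C$, hence $C(s) \wedge r \leq C(t) \wedge r$. \emph{Inflationarity}: from $s \leq C(s)$ and $s \leq r$ (as $s \in \mathord{\downarrow} r$) we get $s \leq C(s) \wedge r = D(s)$. \emph{Idempotence}: since $C(s) \wedge r \leq C(s)$, monotonicity and idempotence of $C$ give $C(C(s) \wedge r) \leq C(C(s)) \leq C(s)$, whence $D(D(s)) = C(C(s) \wedge r) \wedge r \leq C(s) \wedge r = D(s)$.

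The one genuinely non-formal step is the identification of the idempotent subunits of $\cat{C}\restrict{r}$ with $\mathord{\downarrow} r$, where one must be careful that the monoidal unit of $\cat{C}\restrict{r}$ is $R$ rather than $I$ and that the twisted unitors of the restriction do not disturb the notions of idempotence, order, and meet. Everything past that point is a direct consequence of the closure-operator axioms for $C$ together with the monotonicity of $\wedge$.
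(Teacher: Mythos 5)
Your proof is correct and takes essentially the same route as the paper's: identify the idempotent subunits of $\cat{C}\restrict{r}$ with the down-set $\{s \mid s \leq r\}$ (which the paper compresses into a citation of Proposition~\ref{prop:restrictioncoreflective}, while you unfold the correspondence via the unit $R$ and postcomposition with $r$) and then verify the three closure axioms by the same order manipulations. If anything, your idempotence step $C(C(s)\wedge r)\leq C(C(s))\leq C(s)$, giving $D(D(s))\leq D(s)$, is stated more carefully than the paper's, whose intermediate chain ``$C(s)\leq C(C(s))\leq C(C(s)\wedge r)$'' has the inequalities running in the wrong direction even though its conclusion is the intended one.
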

\begin{proof}
  The idempotent subunits in $\cat{C}\restrict{r}$ are those subunits $s$ in $\cat{C}$ with $s \leq r$ by Proposition~\ref{prop:restrictioncoreflective}. 
  If $s \leq t \leq r$, then $C(s)\leq C(t)$, so $D(s)\leq D(t)$.
  If $s \leq r$, then $s \leq C(s)$, and hence also $s \leq D(s)$.
  Finally, if $s \leq r$, then $C(s) \wedge r \leq C(s)$, so $C(s) \leq C(C(s)) \leq C(C(s) \wedge r)$, and so $D(D(s)) \leq D(s)$.
\end{proof}

\section{Teleportation} \label{sec:teleportation}

This section models the quantum teleportation protocol as in~\cite{abramskycoecke:categoricalsemantics} using idempotent subunits. As a proof of concept, we show that it is only successfully supported on the intersection of Alice and Bob's future sets. 
This demonstrates how one may reason about spatial aspects of protocols, without needing tools outside monoidal categories~\cite{blutecomeau:neumann}.
Fix a firm braided monoidal category $\cat{C}$ with a closure operator $C^+$ on the idempotent subunits.
Intuitively, think of the following `spacetime' diagram.
    \[\begin{tikzpicture}
      \draw[dashed] (0,0) ellipse (1 and .5);
      \draw[dashed] (1,0) to (4,4);
      \draw[dashed] (-1,0) to (-4,4);
        \node[font=\small] at (0,0) {pair creation};
        \draw[dashed] (-1.5,1.5) ellipse (.5 and .25);
        \node[font=\small] at (-1.5,1.5) {Alice};
        \draw[dashed] (-2,1.5) to (-3.8,4);
        \draw[dashed] (-1,1.5) to (.8,4);
      \begin{scope}[xshift=20mm,yshift=5mm]
        \draw[dashed] (-1.5,1.5) ellipse (.5 and .25);
        \node[font=\small] at (-1.5,1.5) {Bob};
        \draw[dashed] (-2,1.5) to (-3.3,3.5);
        \draw[dashed] (-1,1.5) to (.3,3.5);
        \draw[draw=none, fill=gray, opacity=.5] (-2.3,2) -- (-3.3,3.5) -- (-1.2,3.5) -- cycle;
      \end{scope}
    \end{tikzpicture}\]

The first step is to generate an entangled pair of particles. Say this happens in some laboratory, whose location is modeled by an idempotent subunit $r$.  All that follows in the protocol happens in the future set $C^+(r)$ of $r$. Without loss of generality we may assume that $\cat{C} = \cat{C}\restrict{C^+(r)}$, or in other words, that $r=\id[I]$. The pair creation is represented by a state $\eta \colon I \to A' \otimes B$. We think of $A'$ and $B$ as two fields, aiming to model teleportation into $B$ from some other field $A$, which Alice will interact with via $A'$. 

The protocol begins by sending Alice and Bob each their half of the entangled pair $\eta$. 
Alice and Bob both have laboratories, that we think of as the open regions in the diagram. Formally these are idempotent subunits $s$ and $t$. Their future sets, drawn as the upward cones in the diagram, are $C^+(s)$ and $C^+(t)$.
Since we think of idempotent subunits as spacetime regions,
\begin{center}
  restriction = propagation.  
\end{center}
That is, sending $A'$ to Alice means restricting it to Alice's future, replacing it with $A' \otimes C^+(s)$. Similarly, sending $B$ to Bob means restricting $B$ to $C^+(t)$. Hence we replace $\eta$ with $\eta' = \eta \otimes \id[C^+(s)] \otimes \id[C^+(t)]$.

Next, Alice receives the unknown input qubit in her laboratory, performs her measurement, and postselects. Since this occurs within her laboratory, it restricts to $C^+(s)$, giving a map $\varepsilon \colon A \otimes A' \otimes C^+(s) \to I$. 
In the probabilistic protocol, Bob receives Alice's state with some probability. We could use Frobenius structures to model Alice's classical communication of her outcome to Bob~\cite{heunenreyes:frobenius}, but this doesn't change the outcome of the story at all.

The whole protocol, omitting coherence isomorphisms, is thus described by the morphism: 
\begin{equation} \label{eq:tele-morphism}
  (\varepsilon \otimes \id[B \otimes C^+(t)]) \circ (\id[A] \otimes \eta') \colon A \otimes C^+(s) \otimes C^+(t) \to B \otimes C^+(t)\text.
\end{equation}
To be successful, it must restrict to some known isomorphism between its domain and $B \otimes C^+(s) \otimes C^+(t)$, \textit{i.e.}\ within Alice and Bob's common future. 
For example, it is easy to see that the unit $\eta$ and counit $\varepsilon$ of a pair of dual objects provide this, after tensoring them with $\id[C^+(s)]$ and $\id[C^+(t)]$ as appropriate (with the induced isomorphism being the identity). In $\cat{Hilb}_{C_0(X)}$, we can use Example~\ref{ex:constanthilbertmodule} for this choice.

All in all, Lemma~\ref{lem:supportin} now says that, in any case, the total morphism~\eqref{eq:tele-morphism} modelling the protocol always has support in $C^+(s) \otimes C^+(t)$, the intersection of Alice and Bob's futures. In general it is not an isomorphism, but only one when restricted to this intersection.

\section{Open questions}

This article is the first step in a larger programme of studying causality using only tools from monoidal categories. There are many open questions for future investigation.
\begin{itemize}
  \item Can we incorporate support and restriction into the graphical calculus for monoidal categories?
  \item What categorical properties ensure that idempotent subunits form a locale, as in $\cat{Hilb}_{C_0(X)}$? Can $\cat{Hilb}_{C_0(X)}$ be regarded as the category of Hilbert spaces internal to some ambient category, whose categorical logic governs supports?
  \item How do idempotent subunits connect to the deeper categorical theory of self-similarity~\cite{hines:coherenceselfsimilarity,fioreleinster:thompsonsgroupf}?
  \item Applications to quantum foundations remain to be explored. How do superselection sectors relate to complemented idempotent subunits, or Bell inequalities to the support of an entangled pair?
  \item How does our work relate to approaches to causality using discarding maps~\cite{coecke2013causal,kissingeruijlen:causal}? One might hope to establish a connection using the 
  CP*-construction~\cite{coeckeheunenkissinger:cpstar,heunenreyes:frobenius} for dagger categories.
  Though $\cat{Hilb}_{C_0(X)}$ is not a dagger category, its so-called \emph{adjointable} morphisms form a dagger subcategory~\cite{heunenreyes:frobenius}. However, considering only these appears to lose spatial structure, since a subunit $C_0(U) \to C_0(X)$ is only adjointable when $U$ is clopen in $X$~\cite[Lemma~3.3]{heunenreyes:frobenius}.  
  \item Is there a structure on the whole category inducing the causal structure on the idempotent subunits? Aside from these, what other interesting structure on $X$ might transfer to structure on $\cat{Hilb}_{C_0(X)}$?
  \item How can this framework model other tasks from relativistic quantum information theory, such as position-based quantum cryptography~\cite{haydenmay:wherewhen}, or summoning~\cite{kent:tasks}?
\end{itemize}

The last two questions ask for relating $C^+$ and $C^-$. We end with a preliminary partial answer. 
In any monoidal category with a zero object $0$ satisfying $E \otimes 0 \simeq 0 \simeq 0 \otimes E$ for all objects $E$, there is a least idempotent subunit $0 \colon 0 \rightarrowtail E$. Call subobjects $s, t$ \emph{disjoint} when $r \leq s$ and $r \leq t$ imply $r = 0$.

\begin{proposition} 
  For any spacetime $X$, the causal structure $I^\pm$ on $\cat{Hilb}_{C_0(X)}$ has \emph{complements}: for every $I^+$-closed subunit $s$ there is a unique $I^-$-closed subunit $t$ disjoint from $s$ such that a subunit $r$ is $0$ whenever it is disjoint from $s$ and $t$, and the same holds when interchanging $I^+$ and $I^-$. 
\end{proposition}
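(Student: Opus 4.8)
The plan is to transport the statement into the topology of $X$ and then build the complement explicitly. By Theorem~\ref{thm:hilbertmodules} the idempotent subunits of $\cat{Hilb}_{C_0(X)}$ are exactly the open sets $U \subseteq X$, ordered by inclusion, with $\otimes$ computing intersection and the least subunit $0$ corresponding to $\emptyset$; the $I^+$-closed and $I^-$-closed subunits are then the open future and past sets. Two subunits $U,V$ are disjoint precisely when $U \cap V = \emptyset$ (the subunit $U \cap V$ lies below both and is nonzero exactly when $U \cap V \neq \emptyset$), and the clause "every subunit $r$ disjoint from both $s$ and $t$ is $0$" says exactly that no nonempty open set avoids $U \cup V$, i.e.\ that $U \cup V$ is dense in $X$. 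So the proposition reduces to the claim: for every open future set $U$ there is a unique open past set $V$ with $U \cap V = \emptyset$ and $U \cup V$ dense.

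For existence I would take $V = \mathrm{int}(X \setminus U) = X \setminus \closure{U}$, the largest open set disjoint from $U$. Disjointness is immediate, and $U \cup V$ is dense because its complement is the boundary $\closure{U} \setminus U$, which has empty interior since $U$ is open. The substantive point is that $V$ is a past set. Being open, $V \subseteq I^-(V)$ by Proposition~\ref{prop:closurescoincide}; for the reverse inclusion suppose $p \chronprec v$ with $v \in V$ but $p \in \closure{U}$. Invoking the standard fact that the chronology relation of a spacetime is open, I would choose neighbourhoods of $p$ and $v$ all of whose points are chronologically related; since $p \in \closure{U}$, some $u \in U$ lies in the first, so $u \chronprec v$ and hence $v \in I^+(U) = U$, contradicting $v \in V$. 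Thus $I^-(V) = V$.

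For uniqueness, suppose $V'$ is another open past set with $U \cap V' = \emptyset$ and $U \cup V'$ dense. Openness and disjointness force $V' \subseteq \mathrm{int}(X \setminus U) = V$. Density of $U \cup V'$ forces $V \subseteq \closure{V'}$, since otherwise $V \setminus \closure{V'}$ would be a nonempty open set meeting neither $U$ nor $V'$. Now fix $p \in V$; as $V$ is a past set, $p \chronprec v$ for some $v \in V$, so $I^+(p) \cap V$ is a nonempty open subset of $V$, and because $V \subseteq \closure{V'}$ it contains some $v' \in V'$. Then $p \chronprec v'$ with $v' \in V'$, and since $V'$ is a past set this yields $p \in V'$. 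Hence $V \subseteq V'$ and therefore $V = V'$. The symmetric statement, interchanging $I^+$ and $I^-$, follows from the same argument after reversing the time orientation of $X$, which swaps future and past sets and interchanges the two operators.

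The main obstacle I anticipate is the causal-geometric step showing that $\mathrm{int}(X \setminus U)$ is genuinely a past set: this is precisely where I must leave pure lattice and topology reasoning and use the openness of the chronology relation together with $I^+(U) = U$. The uniqueness argument is more elementary but has an analogous crux, namely combining the past-set property of $V'$ with its density in $V$ to promote the inclusion $V' \subseteq V$ to an equality; both steps hinge on being able to push a point of one region chronologically into another along the open relation $\chronprec$.
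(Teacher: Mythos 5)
Your proof is correct, and it lands on the same explicit complement as the paper, $V = X \setminus \closure{U}$, but the two verifications run along genuinely different tracks. The paper argues uniqueness first: assuming any past set $P$ satisfies the two conditions, it shows $F$ and $P$ must share a common boundary $B$, using Penrose's characterisations $\closure{F} = \{t \mid I^+(t) \subseteq F\}$ and $\closure{P} = \{t \mid I^-(t) \subseteq P\}$ as its key imported facts; the disjoint decomposition $X = F \cup B \cup P$ then forces $P = X \setminus \closure{F}$, and existence is a check on this formula via the same closure characterisation. You instead import the openness of the chronology relation $\chronprec$: for existence you push a point $u \in U$ near $p \in \closure{U}$ chronologically forward to contradict $v \in V$, and for uniqueness you sandwich $V' \subseteq V \subseteq \closure{V'}$ and promote $p \in V$ into $V'$ by finding a point of $V'$ inside the open set $I^+(p) \cap V$ and using the past-set property of $V'$. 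Both imported facts are standard and available in the paper's reference~\cite{penrose1972techniques}, so neither route is more elementary in substance; what yours buys is a more directly geometric existence proof that only needs $I^+(U) \subseteq U$ and Proposition~\ref{prop:closurescoincide}, while the paper's boundary argument buys uniqueness without first establishing that the candidate complement is a past set. One small economy worth noting in your write-up: the step ``as $V$ is a past set, $p \chronprec v$ for some $v \in V$'' does not actually need the full past-set property of $V$, only $V \subseteq I^-(V)$, which Proposition~\ref{prop:closurescoincide} gives for free from openness; this slightly decouples your uniqueness argument from your existence argument. Your handling of the interchanged statement by reversing the time orientation is also fine and matches the paper's implicit symmetry.
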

\begin{proof}

 Let $F$ be a future set. We need to show there is a unique past set $P$ with $F \wedge P = \emptyset$ and $X = \closure{F} \vee \closure{P}$. We first show that whenever this holds, $F$ and $P$ have the same boundary. 
  Note that $\closure{F} = \{t \mid I^+(t) \subseteq F\}$ and $\closure{P} = \{t \mid I^-(t) \subseteq P\}$~\cite[3.4]{penrose1972techniques}. Suppose $t \in \closure{F} \setminus F$; we will show that $t \in \closure{P} \setminus P$, and the converse holds similarly.
  Since $X = \closure{F} \vee \closure{P}$, for all $s$ either $I^+(s) \subseteq F$ or $I^-(s) \subseteq P$. So if $s \in I^-(t)$ we must have $I^-(s) \subseteq P$, because otherwise $t \in F$ gives a contradiction. 
  Therefore $I^-(t) \subseteq P$, and so $t \in \closure{P}$. 
  But if $t$ is in the past set $P$, then there is $s \in P$ with $t \chronprec s$, contradicting $I^+(t) \subseteq F \subseteq X \setminus P$. 
  Thus $F$ and $P$ have the same boundary $B$, and $X$ is a disjoint union $F \cup B \cup P$. Hence $P=X \setminus \closure{F}$ is unique. 
  Conversely, for any future set $F$, $P = X \setminus \closure{F}$ is easily seen to be such a past set, using the expression for $\closure{F}$ above.
\end{proof}

\bibliographystyle{eptcs}
\bibliography{localisation}

\appendix
\section{Firm rings}\label{sec:firmrings}

This appendix discusses the purely algebraic example of modules over rings that are in general nonunital, but have local units, stripping away the analytic details of Hilbert modules.

\begin{definition}
  A ring $R$ is \emph{firm} when its multiplication is a bijection $R \otimes R \to R$, and \emph{nondegenerate} when $r \in R$ vanishes as soon as $rs=0$ for all $s \in R$.
  Any unital ring is firm and nondegenerate, but examples also include 
  infinite direct sums $\bigoplus_{n \in \mathbb{N}} R_n$ of unital rings $R_n$.
  Firm rings $R$ are \emph{idempotent}: they equal $R^2=\{\sum_{i=1}^n r_i' r_i'' \mid r_i',r_i'' \in R\}$. 
  Let $R$ be a nondegenerate firm commutative ring. An $R$-module $E$ is \emph{firm} when the scalar multiplication is a bijection $E \otimes R \to E$~\cite{quillen:nonunitalrings}, and \emph{nondegenerate} when $x \in E$ vanishes as soon as $xr=0$ for all $r \in R$.
  If $R$ is unital, then every $R$-module is firm and nondegenerate.
  Nondegenerate firm $R$-modules and linear maps form a monoidal category $\cat{FMod}_R$.
\end{definition}

\begin{example}
  The idempotent subunits in $\cat{FMod}_R$ correspond to \emph{nondegenerate firm idempotent ideals}: ideals $S \subseteq R$ that are idempotent as rings, and nondegenerate and firm as $R$-modules. 
  Any ideal that is unital as a ring is a nondegenerate firm idempotent ideal. 
  The category $\cat{FMod}_R$ is firm.
\end{example}
\begin{proof}
  Monomorphisms are injective by nondegeneracy, so every idempotent subunit is a nondegenerate firm $R$-submodule of $R$, that is, a nondegenerate firm ideal. Because the inclusion $S \otimes S \to R \otimes S$ is surjective and $S$ is firm, the map $S \otimes S \to S$ given by $s' \otimes s''\mapsto s's''$ is surjective. Thus $S$ is idempotent.
  
  Conversely, let $S$ be a nondegenerate firm idempotent ideal of $R$. The inclusion $S \otimes S \to R \otimes S$ is surjective, as $r \otimes s \in R \otimes S$ can be written as $r \otimes s's'' = r s' \otimes s'' \in S \otimes S$. Hence $S$ is an idempotent subunit.
  
  Next suppose ideal $S$ is unital (with generally $1_S \neq 1_R$ if $R$ is unital).
  Then $S \otimes R \to S$ given by $s \otimes r \mapsto sr$ is bijective: surjective as $1_S \otimes s \mapsto 1_S s = s$; and injective as $s \otimes r = 1_S \otimes sr = 1_S \otimes 0 = 0$ if $sr=0$.
  Hence $S$ is firm and nondegenerate.
  Any $s \in S$ can be written as $s=s 1_S \in S^2$, so $S$ is idempotent.

  Finally, to see that the category is firm, let $S,T \subseteq R$ be nondegenerate firm idempotent ideals.
  We need to show that the map $S \otimes T \mapsto R \otimes T$ given by $s \otimes t \mapsto s \otimes t$ is injective.
  Because $T$ is firm, it suffices to show that the map $S \otimes T$ given by $s \otimes t \mapsto st$ is injective.
  But this follows directly because $S$ is firm.
\end{proof}

\begin{example}
  Let $S$ be a nondegenerate firm idempotent ideal of a nondegenerate firm commutative ring $R$.
  Then $\cat{FMod}_R\restrict{S}$ is monoidally equivalent to $\cat{FMod}_S$.
\end{example}
\begin{proof}
  Send $E$ in $\cat{FMod}_R\restrict{S}$ to $E$ with $S$-module structure $x \cdot s := xs$, and send an $R$-linear map $f$ to $f$. This defines a functor $\cat{FMod}_R\restrict{S} \to \cat{FMod}_S$.
  In the other direction, a firm $S$-module $F \simeq F \otimes_S S$ has firm $R$-module structure $(y \otimes s)\cdot r:=y \otimes (sr)$ because $S$ is idempotent, and if $g$ is an $S$-linear map then $g \otimes_S \id[S]$ is $R$-linear. This defines a functor $\cat{FMod}_S \to \cat{FMod}_R\restrict{S}$.
  Composing both functors sends a firm $R$-module $E$ to $E \otimes_S S \simeq E \otimes_R R \simeq E$, and a firm $S$-module $F$ to $F \otimes_S S \simeq F$.
\end{proof}

\section{Hilbert submodules}\label{sec:tensorsofhilbertmodules}

Fix a locally compact hausdorff space $X$.
The tensor product of Hilbert modules $E$ and $F$ is constructed as follows:
first, consider the algebraic tensor product of $C_0(X)$-modules,
and then complete it to a Hilbert module under the inner product $\inprod{x \otimes y}{x' \otimes y'}=\inprod{x}{x'}\inprod{y}{y'}$ .

Hilbert modules $E$ and $F$ are \emph{unitarily equivalent} if there are maps $u \colon E \to F$ and $u^\dag \colon F \to E$ satisfying $u\circ u^\dag = \id$, $u^\dag \circ u = \id$, and $\inprod{u(x)}{y}=\inprod{x}{u^\dag(y)}$ for all $x \in E$ and $y \in F$.

\begin{lemma} \label{lem:EtimesCo}
  Any Hilbert module $E$ is unitarily equivalent to $E \otimes C_0(X)$.
\end{lemma}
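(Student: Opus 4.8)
The plan is to exhibit the canonical action map as the required unitary. Define $m \colon E \otimes C_0(X) \to E$ on the algebraic tensor product over $C_0(X)$ by $x \otimes f \mapsto xf$, using the $C_0(X)$-module structure of $E$. First I would check it is well defined: the assignment $(x,f) \mapsto xf$ is $C_0(X)$-balanced, since $xg \otimes f$ and $x \otimes gf$ both map to $xgf$, so it factors through the algebraic tensor product over $C_0(X)$, and it is manifestly $C_0(X)$-linear.

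Next I would verify that $m$ preserves the inner product, which is a one-line computation on elementary tensors. On the one hand $\inprod{m(x \otimes f)}{m(x' \otimes f')} = \inprod{xf}{x'f'} = f^* \inprod{x}{x'} f'$, using that the inner product is conjugate-linear in the first and linear in the second variable. On the other hand $\inprod{x \otimes f}{x' \otimes f'} = \inprod{x}{x'}\inprod{f}{f'} = \inprod{x}{x'} f^* f'$. These agree because $C_0(X)$ is commutative. Hence $m$ is isometric on the algebraic tensor product, so it extends uniquely to an isometry on the completion $E \otimes C_0(X)$; in particular it is injective with closed image in $E$.

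The main work is surjectivity, and this is where I would use an approximate unit. As a C*-algebra, $C_0(X)$ has a bounded approximate unit $(f_n)$, and the standard estimate $\|xf_n - x\|_E^2 = \|\inprod{xf_n - x}{xf_n - x}\| \to 0$ shows $xf_n \to x$ for every $x \in E$. Since $xf_n = m(x \otimes f_n)$ lies in the image of $m$, and this image is closed, the limit $x$ lies in it too. Thus $m$ is a surjective isometry, hence unitary.

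Finally I would package this as a unitary equivalence in the sense defined above, taking $u^\dag = m$ and $u = m^{-1} \colon E \to E \otimes C_0(X)$. The identities $u \circ u^\dag = \id$ and $u^\dag \circ u = \id$ are immediate from $m$ being a bijection, while the adjoint relation $\inprod{u(x)}{y} = \inprod{x}{u^\dag(y)}$ follows from inner-product preservation, since $\inprod{m^{-1}(x)}{y} = \inprod{m(m^{-1}(x))}{m(y)} = \inprod{x}{m(y)}$. The one point requiring care is the approximate-unit argument for surjectivity, together with the verification that $m$ is isometric so that its image is genuinely closed; everything else is routine.
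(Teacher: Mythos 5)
Your proof is correct and follows essentially the same route as the paper's: both exhibit the canonical action map $x \otimes f \mapsto xf$ as the unitary, first establishing it is isometric on the algebraic tensor product and then proving surjectivity via density of $\{xf \mid x \in E,\ f \in C_0(X)\}$ in $E$. The only (harmless) difference is that you inline two facts the paper cites from Lance, namely the density claim (your approximate-unit estimate) and the passage from surjective isometry to unitary (which you get directly from inner-product preservation rather than invoking Lance's Theorem~3.5), making your version slightly more self-contained.
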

\begin{proof}\cite[p.~42]{lance1995hilbert}
  Define $\rho_E \colon E \otimes C_0(X) \to E$ by continuously linearly extending $x \otimes f \mapsto xf$. Then
  \[
    \|\sum_i x_i \otimes f_i \|^2 
    = \| \sum_{i,j} \inprod{x_i \otimes f_i}{x_j \otimes f_j} \| 
    = \| \sum_{i,j} \inprod{x_i}{x_j} f_i^* f_j \| 
    = \| \sum_{i,j} \inprod{x_i f_i}{x_j f_i} \| 
    = \| \rho_E(\sum_i x_i \otimes f_i) \|^2\text.
  \]
  Because the algebraic tensor product of $E$ and $C_0(X)$ is dense in $E \otimes C_0(X)$, this $C_0(X)$-linear map $\rho_E$ is isometric. Because $\{xf \mid x \in E, f \in C_0(X)\}$ is dense in $E$~\cite[page~5]{lance1995hilbert}, the map $\rho_E$ is surjective, and hence unitary~\cite[Theorem~3.5]{lance1995hilbert}.
\end{proof}

We now prove Theorem~\ref{thm:hilbertmodules}: there is a bijective correspondence between idempotent subunits in $\cat{Hilb}_{C_0(X)}$ and open subsets of $X$.

\begin{proof}[Proof of Theorem~\ref{thm:hilbertmodules}]
  If $U$ is an open subset of $X$, we may identify $C_0(U)$ with the closed ideal of $C_0(X)$ given by $\{f \in C_0(X) \mid f(X \setminus U)=0\}$: if $f \in C_0(U)$, then its extension by zero on $X \setminus U$ is in $C_0(X)$, and conversely, if $f \in C_0(X)$ is zero outside $U$, then its restriction to $U$ is in $C_0(U)$.  
  For idempotence, note that the canonical map $C_0(X) \otimes C_0(X) \to C_0(X)$ is always a unitary (see Lemma~\ref{lem:EtimesCo}), and hence the same holds for $C_0(U)$.
  Thus $C_0(U)$ is an idempotent subunit in $\cat{Hilb}_{C_0(X)}$.

  For the converse, let $s \colon S \rightarrowtail C_0(X)$ be an idempotent subunit in $\cat{Hilb}_{C_0(X)}$. We will show that $s(S)$ is a closed ideal in $C_0(X)$,  and therefore of the form $C_0(U)$ for some open subset $U \subseteq X$.
  It is an ideal because $s$ is $C_0(X)$-linear.
  To see that it is closed, let $g \in s(S)$. Then
  \begin{align*}
    \|g\|_S^4    
    & = \| \inprod{g}{g}_S^2 \|_{C_0(X)}
     = \| \inprod{g}{g}_S \inprod{g}{g}_S\|_{C_0(X)}
     = \| \inprod{g \otimes g}{g \otimes g}_{C_0(X)} \|_{C_0(X)}
     = \|g \otimes g\|_S^2 \\
    & \leq \|\rho_S^{-1}\|^2 \|g^2\|_S 
    = \|\rho_S^{-1}\|^2 \| \inprod{g}{g}_S g^* g \|_{C_0(X)}
    \leq \|\rho_S^{-1}\|^2 \|g\|_S^2 \|g\|_{C_0(X)}^2
  \end{align*}
  and therefore $\|g\|_S \leq \| \rho_S^{-1}\|^2 \|g\|_{C_0(X)}^2$.
  Because $s$ is bounded, it is thus an equivalence of normed spaces between $(S,\|-\|_S)$ and $(s(S), \|-\|_{C_0(X)})$. Since the former is complete, so is the latter.
\end{proof}

In contrast, not every subunit in $\cat{Hilb}_{C_0(X)}$ is induced by an open $U \subseteq X$.

\begin{example}
  Let $X=[0,1]$. If $f \in C_0(X)$, write $\hat{f} \in C_0(X)$ for the map $x \mapsto xf(x)$.
  Then $S=\{ \hat{f} \mid f \in E\}$ is a subobject of $E=C_0(X)$ in $\cat{Hilb}_{C_0(X)}$ under $\inprod{\hat{f}}{\hat{g}}_S = \inprod{f}{g}_E$ that is not closed under $\|-\|_E$.
\end{example}
\begin{proof}
  Clearly $S$ is a $C_0(X)$-module, and $\inprod{-}{-}_S$ is sesquilinear.
  Moreover $S$ is complete: $\hat{f_n}$ is a Cauchy sequence in $S$
  if and only if $f_n$ is a Cauchy sequence in $E$, in which case it converges in $E$ to some $f$, and so $\hat{f_n}$ converges to $\hat{f}$ in $S$.
  Thus $S$ is a well-defined Hilbert module.
  The inclusion $S \hookrightarrow E$ is bounded and injective, and hence a well-defined monomorphism.
  In fact, $E$ is a C*-algebra, and $S$ is an ideal. The closure of $S$ in $E$ is the closed ideal $\{f \in C_0(X) \mid f(0)=0\}$, corresponding to the closed subset $\{0\} \subseteq X$. It contains the function $x \mapsto \sqrt{x}$ while $S$ does not, and so $S$ is not closed. 
\end{proof}

\end{document}